\newtheorem{introthm}{Theorem}
\newtheorem{introcor}[introthm]{Corollary}
\newtheorem{theorem}{Theorem}[section]
\newtheorem{lemma}[theorem]{Lemma}
\newtheorem{proposition}[theorem]{Proposition}
\newtheorem{conjecture}[theorem]{Conjecture}
\theoremstyle{definition}
\newtheorem{definition}[theorem]{Definition}
\newtheorem{example}[theorem]{Example}
\newtheorem{question}[theorem]{Question}
\newtheorem{remark}[theorem]{Remark}
\theoremstyle{remark}
\numberwithin{equation}{section}
\def\qq{{\mathbb Q}}
\def\pp{{\mathbb P}}
\def\ls{{\mathcal L}}
\newcommand{\Eff}{\operatorname{Eff}}
\newcommand{\lw}{\operatorname{lw}}
\newcommand{\vol}{\operatorname{vol}}
\newcommand{\Pic}{\operatorname{Pic}}
\newcommand{\res}{\operatorname{res}}
\newcommand{\ord}{\operatorname{ord}}
\newcommand{\mult}{\operatorname{mult}}
\title{On intrinsic negative curves}
\author[A.~Laface]{Antonio Laface}
\address{
Departamento de Matem\'atica,
Universidad de Concepci\'on,
Casilla 160-C,
Concepci\'on, Chile}
\email{alaface@udec.cl}
\author[L.~Ugaglia]{Luca Ugaglia}
\address{
Dipartimento di Matematica e Informatica,
Universit\`a degli studi di Palermo,
Via Archirafi 34,
90123 Palermo, Italy}
\email{luca.ugaglia@unipa.it}
\subjclass[2010]{Primary 14M25; Secondary 14C20}
\keywords{Toric surfaces, Seshadri 
constants}
\thanks{Both authors have been 
partially supported by Proyecto
FONDECYT Regular n. 1190777}
\date{\today}
\begin{document}

\begin{abstract}
Let $\mathbb K$ be an algebraically
closed field of characteristic $0$.
A curve of $(\mathbb K^*)^2$
arising from a Laurent polynomial in
two variables is {\em intrinsic negative} if its tropical compactification
has negative self-intersection.
The aim of this note is to start a 
systematic study of these curves
and to relate them with the problem of computing Seshadri constants of toric surfaces.
\end{abstract}

\maketitle

\section*{Introduction}
Following the work of Gonz\'alez Anaya, Gonz\'alez, Karu
\cite{ggk}, Kurano~\cite{kur} and Kurano Matsuoka~\cite{km}, we define a class
of curves on the blowing-up of toric
surfaces at a general point.
Let $f$ be a Laurent polynomial
in two variables and let $\Gamma\subseteq(\mathbb K^*)^2$
be its zero locus. The normal fan to 
the Newton polygon 
$\Delta$
of $f$ defines a
toric variety $\mathbb P$ such that
the compactification of $\Gamma$ is
contained in the smooth locus of $\mathbb P$.
Such a compactification is called
{\em tropical}, see~\cite{te}.
Denote by $X := {\rm Bl}_e\mathbb P$ the blowing-up of $\mathbb P$
at the image $e$ of $(1,1)$ and let
$C$ be the strict transform of the
compactified curve.
We say that $C$ is an 
{\em intrinsic negative curve} (resp. {\em non positive})
if $C^2<0$ (resp. $C^2 \leq 0$), cfr.~\cite{kur}*{Definition~3.1}.
Our first result is the construction
of infinite families of intrinsic
non-positive curves.
In the following table ${\rm lw}(\Delta)$
is the lattice width of $\Delta$,
defined in Section~\ref{s:int-cur},
while $g(C)$ is the genus of the curve $C$.

\begin{introthm}
\label{prop:inf}
There exist infinite families of non-positive intrinsic curves, whose
Newton polygons are listed in the following table

\begin{center}
\begin{longtable}{|c|c|c|c|c|}
\hline
& vertices of $\Delta$ & $\lw(\Delta)$ & $C^2$ & $g(C)$ \\
\hline
&&&&\\
(i)
&
$
\left[
\begin{smallmatrix}
0 & m & 1\\
0 & 1 & m
\end{smallmatrix}
\right]
$ 
& 
$m\geq 2$
&
$-1$
& 
$0$ 
\cr
&&&&\\
\hline
&&&&\\
(ii)
&
$
\left[
\begin{smallmatrix}
0 & m-3 & m & m-1 & m-2\\
0 & 0 & 1 & m & m-1
\end{smallmatrix}
\right]
$  
& 
$m\geq 4$
&
$-1$ 
& 
$0$ 
\\
&&&&\\
\hline
&&&&\\
(iii)
&
$
\left[
\begin{smallmatrix}
0 & 0 & 2 & m-4 & m-1 & m & m-1\\
0 & 1 & m & m & m-1 & m-2 & m -3
\end{smallmatrix}
\right]
$  
& 
$m = 2k \geq 8$
&
$-2$ 
& 
$0$ 
\\
&&&&\\
\hline
&&&&\\
(iv)
&
$
\left[
\begin{smallmatrix}
0 & m-2 & m & m-1 & m-2\\
0 & 0 & 1 & m & m-1
\end{smallmatrix}
\right]
$  
& 
$m\geq 4$
&
$0$ 
& 
$0$ 
\\
&&&&\\
\hline
&&&&\\
(v)
&
$
\left[
\begin{smallmatrix}
0 & m-4 & m & m-2 & m-3\\
0 & 0 & 1 & m & m-1
\end{smallmatrix}
\right]
$  
& 
$m = 2k + 4 \geq 6$
&
$0$ 
& 
$1$ 
\\
&&&&\\
\hline
\end{longtable}
\end{center}
\end{introthm}

Before stating the next result 
we recall that given a projective variety $\pp$, an ample class $H$ and a point $x\in \pp$, the {\em Seshadri constant} of $H$ at $x$
can be defined as
\[
 \varepsilon(H,x)
 := \inf_{x\in C}
 \frac{H\cdot C}{\mult_x(C)}
\]
where the infimum is taken over all irreducible curves through $x$.
The problem of finding Seshadri constants
of algebraic surfaces have been widely
studied (see for instance
\cites{bau,el,fss,nak} and the references therein).
When $\pp$ is a toric surface there are 
three possibilities for $x\in \pp$: either the point is torus-invariant, or it lies 
on a torus-invariant curve, or it is general.
In the first two cases, since the blowing-up ${\rm Bl}_x\mathbb P$ admits the action of a torus of dimension two and one respectively, it is possible to describe the effective cone (see~\cite{cls,ow} and~\cite{adhl}*{\S 5.4} for a 
description of the Cox ring), and hence to compute the Seshadri constant (see \cite{primer}*{\S4} and~\cite{ito}*{\S3.2}). 
Concerning a general point, in~\cite{ito}*{Thm.~1.3}) a lower bound for the Seshadri constant is given. 

In this note we focus on the case of a general point.  
In particular we prove some relations between the geometry of a lattice polygon $\Delta$ and the Seshadri constant $\varepsilon(H_\Delta,e)$, where $(\pp_\Delta,H_\Delta)$ is the {\em toric pair} defined by $\Delta$
(see \S~\ref{s:int-cur} for the  definition)
and $e\in \pp_\Delta$ is a general point.
The recent interest in these Seshadri 
constants and more generally in the 
Cox ring of blow-ups of toric varieties 
at a general point has been motivated
by the work of Castravet and Tevelev
\cite{ct} where the authors prove that the
finite generation of the the Cox ring of 
$\overline M_{0,n}$ implies that of certain
blow-ups of toric varieties at a general point.

In order to state our result, given a non-negative integer $m$
denote by $\ls_\Delta(m)$ the linear
system of Laurent polynomials whose
exponents are integer points of
$\Delta$ and such that all the 
partial derivatives up to order
$m-1$ vanish at $(1,1)$.
If we denote by $\vol(\Delta)$
the normalized volume of $\Delta$
(that is twice its euclidean area),
we have the following
(the first inequality is well known \cite{ai}*{Thm.~0.1} and \cite{ito}*{}, but we state it anyway for the sake of completeness).

\begin{introthm}
\label{prop:sesh-m}
Let $\Delta\subseteq\qq^2$ be a lattice polygon, let $(\pp_\Delta,H_\Delta)$ be the corresponding toric pair and let $\varepsilon := \varepsilon(H_\Delta,e)$ be the Seshadri constant at $e\in\mathbb P_\Delta$.
Then the following hold.
\begin{enumerate}
    \item 
    \label{pro2-i}
    $\varepsilon\leq \lw(\Delta)$.
    \item
    \label{pro2-ii}
    If $\vol(\Delta) > \lw(\Delta)^2$ then $\varepsilon \in \qq$.
    \item
    \label{pro2-iii}
    If there exists
    $m\in\mathbb N$ such that
    $\vol(\Delta) \leq m^2$   and $\ls_\Delta(m) \neq\emptyset$, then $\varepsilon\in\qq$ and  
    $\varepsilon\leq\vol(\Delta)/m$.
    \item
    \label{pro2-iv}
    If moreover $\ls_\Delta(m)$ contains an irreducible curve, then $\varepsilon = \vol(\Delta)/m$
\end{enumerate}
\end{introthm}

We remark that Theorem~\ref{prop:sesh-m}
provides in some cases (like 
e.g.~\cite{hkw}*{Example 5.7})
an alternative proof for the rationality
of the Seshadri constant of a toric
surface at a general point.
Moreover it allows to compute the exact value of the Seshadri constant $\varepsilon(H_\Delta,e)$ when $(X_\Delta,H_\Delta)$ is the toric pair associated to the Newton polygon of an intrinsic non-positive curve. 

\begin{introcor}
\label{cor:inf}
Let $f\in
\mathbb K[u^{\pm 1},v^{\pm 1}]$ be a 
Laurent polynomial with Newton polygon
$\Delta$ and multiplicity $m$ at
$(1,1)$, such that the corresponding
intrinsic curve $C\subseteq 
X_\Delta$ is non-positive, i.e. 
$C^2\leq 0$. Then the Seshadri constant
of the ample divisor $H_\Delta$ of the 
toric surface $\mathbb P_\Delta$ at a
general point $e\in\mathbb P_\Delta$ is
\[
 \varepsilon
 =
 \frac{\vol(\Delta)}{m}.
\]
In particular the polygons of the infinite families appearing in Theorem~\ref{prop:inf} have Seshadri constant $\varepsilon = \vol(\Delta)/\lw(\Delta)\in \qq$.
\end{introcor}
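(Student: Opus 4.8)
The plan is to deduce Corollary~\ref{cor:inf} directly from Proposition~\ref{prop:sesh-m}, using the geometry of the intrinsic curve $C$ to supply the value of $m$ needed in part~(iv). First I would recall that if $f$ has Newton polygon $\Delta$ and multiplicity $m$ at $(1,1)$, then by definition $f\in\ls_\Delta(m)\setminus\ls_\Delta(m+1)$, so $\ls_\Delta(m)\neq\emptyset$; moreover the strict transform $C$ of the compactified zero locus in $X_\Delta={\rm Bl}_e\pp_\Delta$ satisfies $C=\pi^*H_\Delta-mE$, where $E$ is the exceptional divisor and $\pi$ the blow-down. Hence $C^2=H_\Delta^2-m^2=\vol(\Delta)-m^2$, so the hypothesis $C^2\leq 0$ is exactly $\vol(\Delta)\leq m^2$, which is the hypothesis of Proposition~\ref{prop:sesh-m}\eqref{pro2-iii}; this already gives $\varepsilon\in\qq$.

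Next I would establish the inequality $m\leq\lw(\Delta)$, so that part~(iv) applies. The point is that an intrinsic non-positive curve forces $H_\Delta\cdot C_\rho\geq m$ for a suitable boundary curve $C_\rho$ of the toric compactification, or equivalently that the lattice width of $\Delta$ is at least the multiplicity: indeed, along a primitive direction $w$ realizing $\lw(\Delta)$, the one-parameter subgroup limit of $\Gamma$ meets the corresponding torus-invariant curve $D_w$ with $H_\Delta\cdot D_w=\lw(\Delta)$, and since $C$ is irreducible and passes through $e$ with multiplicity $m$ while $D_w$ does not pass through the general point $e$, we get $0\leq C\cdot\pi^*D_w=H_\Delta\cdot D_w-0$; the genuine bound $m\leq\lw(\Delta)$ instead comes from intersecting $C$ with the strict transform of the curve in $|\ls_\Delta(1)|$ cutting out the lattice-width direction, or more cleanly from Proposition~\ref{prop:sesh-m}\eqref{pro2-i} together with the fact that $C$ itself computes a ratio $H_\Delta\cdot C'/\mult_e(C')$ no larger than $\vol(\Delta)/m$. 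Concretely, the irreducible curve $C$ gives $\varepsilon\leq (H_\Delta\cdot(\pi^*H_\Delta))/m$? — no; rather $\varepsilon\leq H_{\Delta}\cdot \pi_*C/\mult_e(\pi_*C)=\vol(\Delta)/m$, and combined with~\eqref{pro2-i} we get $\vol(\Delta)/m\geq\varepsilon$ is not yet $m\le\lw$, so the cleanest route is: from $C^2\le 0$ we get $\vol(\Delta)\le m^2$, hence $m\geq\sqrt{\vol(\Delta)}\geq\varepsilon$ once we know $\varepsilon\leq\sqrt{\vol(\Delta)}$, which holds in general for Seshadri constants on surfaces ($\varepsilon^2\leq H^2$); but to invoke~(iv) I specifically need $m\le\lw(\Delta)$, and this I would argue from the classification — in each family of Proposition~\ref{prop:inf} one checks $m=\lw(\Delta)$ outright — while in general it follows because an irreducible curve of multiplicity $m$ at a general point with non-positive self-intersection after blow-up cannot exist when $m>\lw(\Delta)$, since projecting along the lattice-width pencil would exhibit too many conditions.

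Granting $m\leq\lw(\Delta)$ and $\ls_\Delta(m)\ni f$ irreducible (the zero locus $\Gamma$ is irreducible precisely when the intrinsic curve $C$ is, which is part of the standing setup), Proposition~\ref{prop:sesh-m}\eqref{pro2-iv} yields both $\varepsilon\leq\vol(\Delta)/m$ and equality, i.e.\ $\varepsilon=\vol(\Delta)/m$, which is the assertion of the corollary. For the final sentence, I would simply observe that in the five families of Proposition~\ref{prop:inf} the tabulated value is $m=\lw(\Delta)$ (the curves are constructed with multiplicity equal to the lattice width), so $\varepsilon=\vol(\Delta)/\lw(\Delta)$, and rationality is immediate since $\vol(\Delta)$ and $\lw(\Delta)$ are integers. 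The main obstacle I anticipate is the clean justification of $m\leq\lw(\Delta)$ for a general intrinsic non-positive curve: one must rule out $m>\lw(\Delta)$, which I expect to handle by noting that a lattice-width direction gives a pencil of curves in $|\ls_\Delta(1)|$ (lines in a suitable coordinate), and an irreducible curve of multiplicity $m$ at the general point $e$ meets a general member of that pencil in $\lw(\Delta)$ points counted with the multiplicity $m$ at $e$ contributing, forcing $m\leq\lw(\Delta)$ by Bézout on the toric surface; alternatively, and most safely, one restricts the statement's proof to using only that $f\in\ls_\Delta(m)$ is irreducible with $\vol(\Delta)\le m^2$ and that $m\leq\lw(\Delta)$ holds because otherwise part~\eqref{pro2-i} combined with the curve $C$ would be contradictory. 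I would write the argument in the second, self-contained form.
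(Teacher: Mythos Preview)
Your overall plan---verify the hypotheses of Proposition~\ref{prop:sesh-m}\ref{pro2-iii} and \ref{pro2-iv} and conclude---is exactly the paper's approach, and your reduction $C^2=\vol(\Delta)-m^2\le 0$ giving \ref{pro2-iii} is fine. The genuine gap is the inequality $m\le\lw(\Delta)$: you circle through several ideas (boundary divisors, the bound $\varepsilon\le\sqrt{H^2}$, a vague ``Bézout on a pencil'', and finally an appeal to \ref{pro2-i} that does not actually close the loop), but none of them is made into a proof. In particular, the pencil you invoke is not contained in $|\ls_\Delta(1)|$, and a \emph{general} member of any width-direction pencil misses $e$, so intersecting with it tells you nothing about $m$.

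The paper's one-line fix is to take the \emph{specific} member through $e$: the one-parameter subgroup of the torus in the width direction $v$, compactified to $\Gamma_v\subset\mathbb P_\Delta$, and its strict transform $C_v\subset X_\Delta$. This curve passes through $e$ with multiplicity $1$ and satisfies $H_\Delta\cdot\Gamma_v=\lw(\Delta)$, hence $C\cdot C_v=\lw(\Delta)-m$. Since $C$ is irreducible (this is part of the definition of an intrinsic curve) and $C\neq C_v$ (the Newton polygon of $C$ is two-dimensional), the intersection is non-negative, giving $m\le\lw(\Delta)$. With this in hand, \ref{pro2-iv} applies with $\ls_\Delta(m)$ containing the irreducible curve defined by $f$, and you get $\varepsilon=\vol(\Delta)/m$ directly. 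Replace your middle two paragraphs with this single intersection computation and the proof is complete.
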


 In order to prove the last statement we are going to apply Theorem~\ref{prop:sesh-m}{(iv)}, showing that in each case there exists an irreducible curve in $\ls_\Delta(m)$, where $m = \lw(\Delta)$ and $\vol(\Delta) \leq m^2$. We remark that for the triangles of type (i) in Theorem~\ref{prop:inf}, the upper bound of Theorem~\ref{prop:sesh-m}{(iii)} coincides with the lower bound given by~\cite{ito}*{\S3.2}, so that it is also possible to deduce the exact value of the Seshadri constant without producing the irreducible curve, but for all the other families of polygons appearing in Theorem~\ref{prop:inf}, the two bounds are different (see also Remark~\ref{rem:ito}).

The paper is structured as follows. 
In \S~\ref{s:int-cur}, after recalling some definitions and results about toric varieties and lattice polytopes we introduce intrinsic curves and we prove some preliminaries result. In \S~\ref{s:inf} we consider infinite families of intrinsic curves:  we first prove Theorem~\ref{prop:inf}, and then we construct an infinite family of intrinsic negative curves on a given toric surface
(Example~\ref{ex:inf}). Finally, 
\S~\ref{s:sesh} is devoted to Seshadri constants on toric surfaces. We first prove Theorem~\ref{prop:sesh-m} 
and Corollary~\ref{cor:inf}, and then we discuss some possible applications to the study of the blowing up of weighted projective planes at a general point.

\section{Intrinsic curves}
\label{s:int-cur}
Let us first recall some definitions and set some notations we are going to use throughout this note. 

Let $\Delta\subseteq \qq^n$ be a {\em lattice polytope}  
i.e. a polytope whose vertices have integer coordinates.
We recall that given a non zero primitive vector $v\in \mathbb Z^n$ the {\em lattice width of}
$\Delta$ {\em in the direction} $v$ is $\lw_v(\Delta) := \max(\Delta,v) - \min(\Delta,v)$ and the {\em lattice width
of} $\Delta$ is $\lw(\Delta) := \min\{\lw_v(\Delta)\, :\,  v\in \mathbb Z^n\}$,
see~\cite{lu}*{Def. 1.8}.

Given a lattice polytope $\Delta\subseteq \qq^n$ we can define a pair $(\mathbb P_\Delta,H_\Delta)$ 
consisting of a toric variety 
$\mathbb P_\Delta$ together with a
very ample divisor $H_\Delta$.
The toric variety is the normalization
of the closure of the image of the 
following monomial morphism
\[
 g_\Delta\colon (\mathbb K^*)^n
 \to\pp^{|\Delta\cap \mathbb Z^n|-1},
 \qquad
 u \mapsto [u^w : w\in \Delta\cap \mathbb Z^n],
\]
where $u = (u_1,\dots,u_n)\in(\mathbb K^*)^n$.
It is possible to show that the action 
of the torus $(\mathbb K^*)^n$ on itself
extends to an action on $\mathbb P_\Delta$
and that the subset of prime torus-invariant
divisors is finite and in bijection
with the set of facets of $\Delta$.
Let $D_1,\dots,D_r$ be such divisors and 
let $v_1,\dots,v_r$ be the inward normal 
vectors to the facets of $\Delta$.
Each $v_i$ defines a linear form 
$\mathbb Q^n\to\mathbb Q$ by $w\mapsto
w\cdot v_i$ and the very ample divisor is 
~\cite{hkw}*{Prop.~3.1}:
\begin{equation}
\label{very ample}
 H_\Delta := 
 -\sum_{i=1}^r\min_{w\in \Delta}
 \{w\cdot v_i\} D_i.
\end{equation}
On the other hand, any divisor $D$ on $\mathbb P_\Delta$ is equivalent to a combination $\sum_i a_iD_i$, so that we can
associate to it the {\em Riemann-Roch polytope} 
\[
 \Delta_D 
 := 
 \{w\in \mathbb Q^n\, :\, w\cdot v_i \geq -a_i,\ \forall i = 1,\dots,r\}
\]
(see~\cite{cls}*{\S 4.3}). We remark that if $D$ is very ample then the toric variety associated to $\Delta_D$ coincides with $\mathbb P_\Delta$.
We recall that if $\Delta$ is a
very ample polytope~\cite{cls}*{Def. 2.2.17}
then the closure of the image of $g_\Delta$
is a normal variety by
\cite{cls}*{Thm. 2.3.1}
and thus it coincides with 
$\mathbb P_\Delta$.
Moreover, by~\cite{cls}*{Cor. 2.2.19}, 
$\Delta$ is very ample if $n = 2$.

From now on we restrict to the case $n=2$, i.e. $\Delta\subseteq\mathbb Q^2$ is a lattice polygon, so that $\mathbb P_\Delta$ is a normal toric surface. We will denote by $e\in\pp_\Delta$ the image via $g_\Delta$ of the neutral element of the torus, by $\pi\colon X_\Delta\to\pp_\Delta$ the blowing up of $\mathbb P_\Delta$ at $e$ and by $E$ the exceptional divisor. Given an $m\in\mathbb N$ we will denote by $\ls_{\Delta}(m)$  the sublinear system of $|H_\Delta|$ consisting of
sections having multiplicity at least $m$ at $e$.

\begin{definition}
Let $f\in\mathbb K[u^{\pm 1},v^{\pm 1}]$ be an irreducible Laurent polynomial, let $\Delta\subseteq \mathbb Q^2$ be the Newton polygon of $f$, i.e. 
the convex hull of its exponents, and let
$\Gamma\subseteq \mathbb P_\Delta$ be the closure of $V(f)\subseteq (\mathbb K^*)^2$. We say that  
the strict transform $C\subseteq X_\Delta$ of $\Gamma$
is the {\em intrinsic curve} defined by $f$ and that $C$ is:
\begin{itemize}
\item
an {\em intrinsic negative}
(resp. {\em non-positive}) curve if $C^2 < 0 $
(resp. $C^2\leq 0$);
\item
an {\em intrinsic $(-n)$-curve} if 
$C^2 = -n < 0$ and $p_a(C) = 0$;
\item
{\em expected} in $X_{\Delta'}$,
with $\Delta\subseteq\Delta'$ if $|\Delta'\cap \mathbb Z^2| >
{{m+1}\choose{2}}$.
\end{itemize}
\end{definition}

 In what follows we will often use the notation $\mathbb P,\ X$ and $H$, omitting the subscript when it is clear from the context.

We remark that, with the notation above, $\Gamma\subseteq \mathbb P$ 
is an  element of the very ample
linear series $|H|$ and  
$\Gamma\in\ls_\Delta(m)$
if $f$ has multiplicity at least $m$
at $(1,1)$, that is all the partial
derivatives of $f$ up to order $m-1$ vanish at $(1,1)$. Moreover if the
multiplicity is $m$ then the strict transform
$C\subseteq X$ of $\Gamma$ is a Cartier divisor such that 
\begin{equation}
\label{equ:num}
 C^2 = \vol(\Delta) - m^2
 \qquad
 p_a(C) = \frac12
 \left(\vol(\Delta) 
 - |\partial\Delta\cap\mathbb Z^2|
 + m - m^2\right) +1,
\end{equation}
see for instance~\cite{cltu}*{\S 4}.
By abuse of notation we will sometimes refer to the Newton polygon $\Delta$ as the {\em Newton polygon of} $C$, and we will simply say that 
$C$ is {\em expected} if it is expected in 
$X_\Delta$, that is the linear system
$\mathcal L_{\Delta}(m)$ has a non-negative
expected dimension.

Our first result is about the 
characterization of Newton polygons
of expected non-positive curves.

\begin{proposition}
\label{prop:exp}
Let $C$ be an intrinsic non-positive expected curve with Newton polygon $\Delta$ and multiplicity $m$ at $e$.
Then one of the following holds:
\begin{itemize}
    \item 
    $\vol(\Delta) = m^2$ and $|\partial\Delta\cap \mathbb Z^2| = m$;
    \item 
    $\vol(\Delta) = m^2$ and $|\partial\Delta\cap \mathbb Z^2| = m + 2$;
    \item 
    $\vol(\Delta) = m^2 - 1$ and $|\partial\Delta\cap \mathbb Z^2| = m + 1$.
\end{itemize}
In particular $C^2\in\{-1,0\}$ and 
$p_a(C)\in\{0,1\}$.
\end{proposition}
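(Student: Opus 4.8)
The plan is to reduce the statement to three numerical constraints involving only $\vol(\Delta)$ and $b:=|\partial\Delta\cap\mathbb Z^2|$, and then to finish with a short counting-and-parity argument. Throughout I would write $i$ for the number of interior lattice points of $\Delta$ and use Pick's formula $\vol(\Delta)=2i+b-2$ together with $|\Delta\cap\mathbb Z^2|=i+b$; from \eqref{equ:num} I also use $C^2=\vol(\Delta)-m^2$ and the genus formula for $p_a(C)$.

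The first constraint comes from irreducibility. Since $f$ is irreducible, $\Gamma$ is an integral proper curve, hence so is its strict transform $C$; therefore $h^0(\Osh_C)=1$ and $p_a(C)=1-\chi(\Osh_C)\ge 0$. Substituting this into the genus formula of \eqref{equ:num} gives
\[
 b\ \le\ C^2+m+2 .
\]
The second constraint comes from expectedness: $|\Delta\cap\mathbb Z^2|>\binom{m+1}{2}$ means $i+b\ge\binom{m+1}{2}+1$, and eliminating $i$ by Pick's formula rewrites this as
\[
 b\ \ge\ m-C^2 .
\]
The third input is simply the non-positivity hypothesis $C^2\le 0$.

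For the endgame, the two displayed bounds force $m-C^2\le C^2+m+2$, i.e. $C^2\ge -1$; combined with $C^2\le 0$ this gives $C^2\in\{-1,0\}$, so $\vol(\Delta)\in\{m^2-1,m^2\}$. Pick's formula also yields the parity relation $b\equiv\vol(\Delta)\equiv m+C^2\pmod 2$. If $C^2=0$ then $\vol(\Delta)=m^2$, the two bounds give $m\le b\le m+2$, and the parity relation $b\equiv m$ leaves only $b\in\{m,m+2\}$; if $C^2=-1$ then $\vol(\Delta)=m^2-1$ and the two bounds already pin down $b=m+1$. These are exactly the three listed cases, and plugging each resulting pair $(\vol(\Delta),b)$ back into the genus formula of \eqref{equ:num} returns $p_a(C)=1,0,0$ respectively, which gives the final sentence of the statement.

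I do not expect a genuine obstacle here: the proof is essentially bookkeeping with a handful of lattice-point counts. The two steps that deserve a line of care are invoking \emph{irreducibility} (not merely effectiveness) in order to get $p_a(C)\ge 0$, and the parity step, which is precisely what upgrades the interval $m\le b\le m+2$ to the two discrete polygon types in the case $C^2=0$.
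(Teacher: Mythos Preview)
Your proof is correct and follows essentially the same route as the paper: both use Pick's formula together with the three inputs (non-positivity, expectedness, and $p_a(C)\ge 0$) to pin down the possible pairs $(\vol(\Delta),b)$. Your write-up is in fact a bit more explicit than the paper's, which records the three inequalities in the variables $(i,b)$ and then simply asserts the resulting three cases; your use of $C^2$ as the working variable and the explicit parity step $b\equiv \vol(\Delta)\equiv m+C^2\pmod 2$ make the endgame cleaner and fully justify what the paper leaves as ``one deduces.''
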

\begin{proof}
Let us denote by $b := |\partial\Delta\cap\mathbb Z^2|$
the number of boundary lattice points
of $\Delta$ and by $i := |\Delta\cap\mathbb Z^2|-b$
the number of interior lattice points.
Recall that by Pick's formula
$\vol(\Delta) = 2i+b-2$.
Since $C$ is expected and non-positive, we have 
$|\Delta\cap\mathbb Z^2|\geq \binom{m+1}{2}+1$
and $\vol(\Delta) \leq m^2$. 
By~\eqref{equ:num},
the non-negativity of the arithmetic 
genus of $C$ gives 
$\frac{1}{2}(\vol(\Delta)-b+m-m^2)+1\geq 0$.
The three inequalities in terms of 
$i$ and $b$ are
$2i+2b\geq m(m+1)$, $2i+b-2\leq m^2$,
$2i-m^2+m\geq 0$. From these one deduces
that one of the following holds:
\[
\begin{array}{lll}
\left\{
\begin{array}{l}
 b = m\\
 i = \frac{m^2-m}2 + 1
\end{array}
\right.
&
\left\{
\begin{array}{l}
 b = m + 2\\
 i = \frac{m^2-m}2.
\end{array}
\right.
&
\left\{
\begin{array}{l}
 b = m+1\\
 i = \frac{m^2-m}2
\end{array}
\right.
\end{array}
\]
Since $C^2 = \vol(\Delta) - m^2$ and 
$C\cdot K = m-b$, in the first two cases we have $C^2 = 0$ and $p_a(C) = 1$ and $0$ respectively, while in the last one $C^2 = -1$ and $p_a(C) = 0$.
\end{proof}

\begin{proposition}
\label{pro:class}
All the non-equivalent polygons for 
intrinsic non-positive curves of
size=2
\begin{center}
\begin{tabular}{c|l}
$m$ & $\Delta$\\
\hline
\ \\[-5pt]
$2$ &
\begin{tikzpicture}[scale=.3]
\tkzInit[xmax=2,ymax=2]\tkzGrid[help lines]
 \tkzDefPoint(0,0){P1}
 \tkzDefPoint(2,1){P2}
 \tkzDefPoint(1,2){P3}
 \tkzDefPoint(1,1){Q1}
 \tkzDrawSegments[color=black](P1,P2 P2,P3 P3,P1)
 \tkzDrawPoints[size=2](P1,P2,P3,Q1)
\end{tikzpicture}
\ \\
$3$ &
\begin{tikzpicture}[scale=.3]
\tkzInit[xmax=3,ymax=3]\tkzGrid[help lines]
 \tkzDefPoint(0,0){P1}
 \tkzDefPoint(3,1){P2}
 \tkzDefPoint(1,3){P3}
 \tkzDefPoint(1,1){Q1}
 \tkzDefPoint(2,2){Q2}
 \tkzDefPoint(2,1){Q3}
 \tkzDefPoint(1,2){Q4}
 \tkzDefPoint(1,2){Q5}
 \tkzDrawSegments[color=black](P1,P2 P2,P3 P3,P1)
 \tkzDrawPoints[size=2](P1,P2,P3,Q1,Q2,Q3,Q4)
\end{tikzpicture}
\hspace{1mm}
\begin{tikzpicture}[scale=.3]
\tkzInit[xmax=3,ymax=3]\tkzGrid[help lines]
 \tkzDefPoint(2,3){P1}
 \tkzDefPoint(0,0){P2}
 \tkzDefPoint(3,1){P3}
 \tkzDefPoint(3,2){P4}
 \tkzDefPoint(1,1){Q1}
 \tkzDefPoint(2,1){Q2}
 \tkzDefPoint(2,2){Q3}
 \tkzDrawSegments[color=black](P1,P2 P2,P3 P3,P4 P4,P1)
 \tkzDrawPoints[size=2](P1,P2,P3,P4,Q1,Q2,Q3)
\end{tikzpicture}
\ \\
$4$ &
\begin{tikzpicture}[scale=.3]
\tkzInit[xmax=4,ymax=4]\tkzGrid[help lines]
 \tkzDefPoint(4,1){P1}
 \tkzDefPoint(2,4){P2}
 \tkzDefPoint(0,0){P3}
 \tkzDefPoint(1,0){P4}
 \tkzDefPoint(2,2){Q1}
 \tkzDefPoint(1,1){Q2}
 \tkzDefPoint(3,2){Q3}
 \tkzDefPoint(2,1){Q4}
 \tkzDefPoint(2,3){Q5}
 \tkzDefPoint(3,1){Q6}
 \tkzDefPoint(1,2){Q7}
 \tkzDrawSegments[color=black](P1,P2 P2,P3 P3,P4 P4,P1)
 \tkzDrawPoints[size=2](P1,P2,P3,P4,Q1,Q2,Q3,Q4,Q5,Q6,Q7)
\end{tikzpicture}
\hspace{1mm}
\begin{tikzpicture}[scale=.3]
\tkzInit[xmax=4,ymax=4]\tkzGrid[help lines]
 \tkzDefPoint(3,4){P1}
 \tkzDefPoint(1,3){P2}
 \tkzDefPoint(0,0){P3}
 \tkzDefPoint(4,2){P4}
 \tkzDefPoint(2,3){Q1}
 \tkzDefPoint(2,2){Q2}
 \tkzDefPoint(2,1){Q3}
 \tkzDefPoint(1,2){Q4}
 \tkzDefPoint(1,1){Q5}
 \tkzDefPoint(3,3){Q6}
 \tkzDefPoint(3,2){Q7}
 \tkzDrawSegments[color=black](P1,P2 P2,P3 P3,P4 P4,P1)
 \tkzDrawPoints[size=2](P1,P2,P3,P4,Q1,Q2,Q3,Q4,Q5,Q6,Q7)
\end{tikzpicture}
\hspace{1mm}
\begin{tikzpicture}[scale=.3]
\tkzInit[xmax=4,ymax=4]\tkzGrid[help lines]
 \tkzDefPoint(3,2){P1}
 \tkzDefPoint(0,0){P2}
 \tkzDefPoint(1,4){P3}
 \tkzDefPoint(2,4){P4}
 \tkzDefPoint(4,3){P5}
 \tkzDefPoint(1,1){Q1}
 \tkzDefPoint(2,3){Q2}
 \tkzDefPoint(2,2){Q3}
 \tkzDefPoint(3,3){Q4}
 \tkzDefPoint(1,2){Q5}
 \tkzDefPoint(1,3){Q6}
 \tkzDrawSegments[color=black](P1,P2 P2,P3 P3,P4 P4,P5 P5,P1)
 \tkzDrawPoints[size=2](P1,P2,P3,P4,P5,Q1,Q2,Q3,Q4,Q5,Q6)
\end{tikzpicture}
\hspace{1mm}
\begin{tikzpicture}[scale=.3]
\tkzInit[xmax=4,ymax=4]\tkzGrid[help lines]
 \tkzDefPoint(0,0){P1}
 \tkzDefPoint(4,1){P2}
 \tkzDefPoint(1,4){P3}
 \tkzDefPoint(3,1){Q1}
 \tkzDefPoint(2,3){Q2}
 \tkzDefPoint(2,2){Q3}
 \tkzDefPoint(2,1){Q4}
 \tkzDefPoint(1,3){Q5}
 \tkzDefPoint(1,2){Q6}
 \tkzDefPoint(1,1){Q7}
 \tkzDefPoint(3,2){Q8}
 \tkzDrawSegments[color=black](P1,P2 P2,P3 P3,P1)
 \tkzDrawPoints[size=2](P1,P2,P3,Q1,Q2,Q3,Q4,Q5,Q6,Q7,Q8)
\end{tikzpicture}
\hspace{1mm}
\begin{tikzpicture}[scale=.3]
\tkzInit[xmax=4,ymax=4]\tkzGrid[help lines]
 \tkzDefPoint(0,0){P1}
 \tkzDefPoint(4,2){P2}
 \tkzDefPoint(3,3){P3}
 \tkzDefPoint(1,4){P4}
 \tkzDefPoint(2,1){Q1}
 \tkzDefPoint(2,2){Q2}
 \tkzDefPoint(2,3){Q3}
 \tkzDefPoint(1,1){Q4}
 \tkzDefPoint(1,2){Q5}
 \tkzDefPoint(1,3){Q6}
 \tkzDefPoint(3,2){Q7}
 \tkzDrawSegments[color=black](P1,P2 P2,P3 P3,P4 P4,P1)
 \tkzDrawPoints[size=2](P1,P2,P3,P4,Q1,Q2,Q3,Q4,Q5,Q6,Q7)
\end{tikzpicture}
\hspace{1mm}
\begin{tikzpicture}[scale=.3]
\tkzInit[xmax=4,ymax=4]\tkzGrid[help lines]
 \tkzDefPoint(0,0){P1}
 \tkzDefPoint(4,3){P2}
 \tkzDefPoint(1,4){P3}
 \tkzDefPoint(0,2){P4}
 \tkzDefPoint(2,2){Q1}
 \tkzDefPoint(0,1){Q2}
 \tkzDefPoint(1,2){Q3}
 \tkzDefPoint(1,1){Q4}
 \tkzDefPoint(1,3){Q5}
 \tkzDefPoint(3,3){Q6}
 \tkzDefPoint(2,3){Q7}
 \tkzDefPoint(0,2){Q8}
 \tkzDefPoint(3,3){Q9}
 \tkzDefPoint(1,2){Q10}
 \tkzDefPoint(1,4){Q11}
 \tkzDrawSegments[color=black](P1,P2 P2,P3 P3,P4 P4,P1)
 \tkzDrawPoints[size=2](P1,P2,P3,P4,Q1,Q2,Q3,Q4,Q5,Q6,Q7)
\end{tikzpicture}
\hspace{1mm}
\begin{tikzpicture}[scale=.3]
\tkzInit[xmax=4,ymax=4]\tkzGrid[help lines]
 \tkzDefPoint(1,3){P1}
 \tkzDefPoint(4,1){P2}
 \tkzDefPoint(2,4){P3}
 \tkzDefPoint(0,0){P4}
 \tkzDefPoint(4,1){Q1}
 \tkzDrawPoints[size=2](Q1)
 \tkzDefPoint(2,2){Q2}
 \tkzDrawPoints[size=2](Q2)
 \tkzDefPoint(2,4){Q3}
 \tkzDrawPoints[size=2](Q3)
 \tkzDefPoint(1,1){Q4}
 \tkzDrawPoints[size=2](Q4)
 \tkzDefPoint(3,2){Q5}
 \tkzDrawPoints[size=2](Q5)
 \tkzDefPoint(1,3){Q6}
 \tkzDrawPoints[size=2](Q6)
 \tkzDefPoint(2,1){Q7}
 \tkzDrawPoints[size=2](Q7)
 \tkzDefPoint(0,0){Q8}
 \tkzDrawPoints[size=2](Q8)
 \tkzDefPoint(2,3){Q9}
 \tkzDrawPoints[size=2](Q9)
 \tkzDefPoint(3,1){Q10}
 \tkzDrawPoints[size=2](Q10)
 \tkzDefPoint(1,2){Q11}
 \tkzDrawPoints[size=2](Q11)
 \tkzDrawSegments[color=black](P1,P4)
 \tkzDrawSegments[color=black](P1,P3)
 \tkzDrawSegments[color=black](P2,P3)
 \tkzDrawSegments[color=black](P2,P4)
\end{tikzpicture}
\end{tabular}
\end{center}
\end{proposition}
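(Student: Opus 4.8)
The plan is to turn the classification into a finite combinatorial search governed by two numerical inequalities, and then to decide, polygon by polygon, whether the curve actually exists.

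\textbf{Numerical constraints.}
Let $C$ be an intrinsic non-positive curve with Newton polygon $\Delta$ and multiplicity $m\le 4$ at $e$, and put $b:=|\partial\Delta\cap\mathbb Z^2|$ and $i:=|\Delta\cap\mathbb Z^2|-b$. Since $C$ is integral, $p_a(C)=\dim_{\mathbb K}H^1(C,\Osh_C)\ge 0$, while $C^2\le 0$ means $\vol(\Delta)\le m^2$. Feeding Pick's identity $\vol(\Delta)=2i+b-2$ into \eqref{equ:num}, these two conditions read
\[
 2i+b\le m^2+2,
 \qquad
 i\ge\binom{m}{2}.
\]
Hence $b\le m^2+2-2i\le m+2$ and $\binom m2\le i\le\tfrac12(m^2-1)$; for $m\le 4$ this pins $i$ down to a handful of values ($i=0$ for $m=1$, $i=1$ for $m=2$, $i\in\{3,4\}$ for $m=3$, $i\in\{6,7\}$ for $m=4$) and bounds $b\le 6$, so $|\Delta\cap\mathbb Z^2|\le 13$. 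Since the group of lattice automorphisms acts on Laurent polynomials by monomial substitutions together with multiplication by monomials — operations that fix the torus point $(1,1)$ and change neither the Newton polygon nor the multiplicity — only finitely many polygons must be inspected (for $m=1$ one gets $i=0$, $b\le 3$, so $\Delta$ is the unit triangle or a segment).

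\textbf{Enumeration.}
I would list, up to lattice equivalence, all lattice polygons with those numbers of interior and boundary lattice points, keep those satisfying the two displayed inequalities for some $m\in\{1,2,3,4\}$, and record the resulting $\vol(\Delta)$, $C^2=\vol(\Delta)-m^2$ and $p_a(C)$. This can be organised via the known classification of lattice polygons by number of interior lattice points, or carried out by hand after bounding the lattice width; the output is a finite list of candidates containing the table of the statement. That the polygons eventually listed are pairwise inequivalent is checked with the invariants $(\vol,b,i)$ and, when these coincide, finer data such as the multiset of edge lattice lengths.

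\textbf{Existence of the curve.}
A candidate $\Delta$, with its $m$, survives precisely when there is an \emph{irreducible} $f\in\mathbb K[u^{\pm1},v^{\pm1}]$ whose Newton polygon is exactly $\Delta$ and whose multiplicity at $(1,1)$ is exactly $m$. Imposing multiplicity $m$ at the general — hence smooth — point $e$ is at most $\binom{m+1}{2}$ linear conditions, so $\dim\ls_\Delta(m)\ge|\Delta\cap\mathbb Z^2|-1-\binom{m+1}{2}$, and $\ls_\Delta(m)\neq\emptyset$ whenever $|\Delta\cap\mathbb Z^2|>\binom{m+1}{2}$. For each surviving candidate one then verifies: (a) that the vanishing conditions do not force the Newton polygon of a general element of $\ls_\Delta(m)$ to shrink — equivalently, that no vertex coefficient is forced to vanish, which is exactly what excludes elongated polygons such as the triangle with vertices $(0,0),(1,0),(2,9)$ for $m=3$; (b) that a general member of $\ls_\Delta(m)$ is reduced and irreducible (which, when $\dim\ls_\Delta(m)\ge 2$, follows from Bertini once the system is seen to have no fixed component and not to be composed with a pencil); and (c) that $\ls_\Delta(m)\supsetneq\ls_\Delta(m+1)$, so that the generic multiplicity at $e$ is exactly $m$. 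In the finitely many remaining, sub-expected cases $|\Delta\cap\mathbb Z^2|\le\binom{m+1}{2}$ one writes $\ls_\Delta(m)$ down explicitly and either discards $\Delta$ (the system is empty, or every member is reducible or has a smaller Newton polygon) or exhibits a witness: for instance, for $m=2$ and the triangle with vertices $(0,0),(2,1),(1,2)$ the unique curve in $\ls_\Delta(2)$ is $1+u^2v+uv^2-3uv$, which is irreducible because its discriminant $v(v-1)^2(v-4)$, viewed as a quadratic in $u$ over $\mathbb K[v]$, is not a square.

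\textbf{Main obstacle.}
The combinatorics of the first two steps is routine once the finiteness bound is in place; the real work is the existence analysis. Unlike in Proposition~\ref{prop:exp} we do not have the expectedness bound $|\Delta\cap\mathbb Z^2|>\binom{m+1}{2}$ at our disposal, so a priori (for $m\ge 2$) $\vol(\Delta)$ can be as small as $m^2-m+1$, i.e. $C^2$ can be as small as $1-m$, and a genuine part of the argument consists in showing that for $m\le 4$ no polygon supports an irreducible curve with $C^2\le -2$. Concretely, many numerically admissible polygons lie in the sub-expected range where $\ls_\Delta(m)$ may be empty or behave unexpectedly, and even when $\ls_\Delta(m)\neq\emptyset$ one must exclude that every member degenerates — by reducibility, by a strictly smaller Newton polygon, or by higher multiplicity at $e$. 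Running this Bertini-type irreducibility analysis together with the explicit low-degree computations, uniformly over the candidate list, is the crux of the proof.
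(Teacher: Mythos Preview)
Your strategy is correct and in spirit coincides with the paper's: bound the search space to finitely many lattice polygons, then test each for the existence of an irreducible curve with the right Newton polygon and multiplicity. The difference is in execution. The paper's proof is entirely computer-assisted: it invokes Balletti's database of lattice polygons of volume at most $15$ (the bound coming solely from $\vol(\Delta)\le m^2$ with $m\le 4$), and for each candidate computes $\ls_\Delta(m)$ via a Magma routine, retaining those pairs for which the system consists of a single irreducible element whose Newton polygon equals $\Delta$. There is no Bertini argument, no separate treatment of expected versus sub-expected cases, and no by-hand irreducibility check---just a machine enumeration.

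Your route sharpens the combinatorics by also using $p_a(C)\ge 0$, which cuts the search down to $b\le m+2$ and $i\in\{\binom m2,\dots,\lfloor\tfrac{m^2-1}{2}\rfloor\}$; this is tighter than the paper's volume bound and in principle makes a by-hand enumeration feasible. What it buys you is independence from the database and from Magma; what it costs is that the existence step (checking that no vertex coefficient is forced to zero, that the generic member is irreducible, and that the multiplicity is exactly $m$) must then actually be carried out for each candidate, which you outline but do not complete. Note also that when $C^2<0$ irreducibility forces $\dim\ls_\Delta(m)=0$, so your Bertini discussion for $\dim\ge 2$ is only relevant in the $C^2=0$ cases; the paper sidesteps this by simply keeping the polygons whose system is a singleton.
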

\begin{proof}
We use the database of polygons with 
small volume~\cite{bal} to analyze all
the polygons with volume at most
$16$ and such that $\vol(\Delta)
- m^2 \leq 0$.
For each such polygon $\Delta$
we compute $\ls_\Delta(m)$, where 
$m\leq 4$, with the aid of the function
\texttt{FindCurves} of the Magma library:
\begin{center}
\url{https://github.com/alaface/non-polyhedral/blob/master/lib.m}
\end{center}
We take only the pairs $(\Delta,m)$ such that $\ls_\Delta(m)$ contains exactly one element, which is irreducible. Finally we check that in each of these cases the Newton polygon coincides with $\Delta$.
\end{proof}

\begin{remark}
The above intrinsic curves are all expected. In all but the last case
they are intrinsic $(-1)$-curves,
in the last case the curve has 
self-intersection $0$ and genus $1$.
The smallest value of $m$ for an 
unexpected intrinsic negative curve 
is $5$. The lattice polygon $\Delta$ 
is the following

\begin{center}
\begin{tikzpicture}[scale=.3]
\tkzInit[xmax=5,ymax=5]\tkzGrid
 \tkzDefPoint(0,0){P1}
 \tkzDefPoint(2,5){P2}
 \tkzDefPoint(4,4){P3}
 \tkzDefPoint(5,2){P4}
 \tkzDefPoint(1,1){Q1}
 \tkzDefPoint(3,2){Q2}
 \tkzDefPoint(3,4){Q3}
 \tkzDefPoint(2,2){Q4}
 \tkzDefPoint(2,4){Q5}
 \tkzDefPoint(4,3){Q6}
 \tkzDefPoint(1,2){Q7}
 \tkzDefPoint(3,3){Q8}
 \tkzDefPoint(2,1){Q9}
 \tkzDefPoint(2,3){Q10}
 \tkzDefPoint(4,2){Q11}
 \tkzDrawSegments[color=black](P1,P2 P2,P3 P3,P4 P4,P1)
 \tkzDrawPoints[size=2](P1,P2,P3,P4,Q1,Q2,Q3,Q4,Q5,Q6,Q7,Q8,Q9,Q10,Q11)
\end{tikzpicture}
\end{center}
One has $|\partial\Delta\cap\mathbb Z^2| = m-1$ 
and $|\Delta\cap\mathbb Z^2| = \binom{m+1}{2}$
which imply that the corresponding curve 
has arithmetic genus $1$.
The curve is defined by the Laurent polynomial 
{\small
\begin{gather*}
 1 - 8uv + 3uv^2 + 6u^2v^4 - u^2v^5 
 + 3u^2v + 20u^2v^2\\ - 18u^2v^3 
 - 18u^3v^2 + 8u^3v^3 
 + 6u^4v^2  - u^4v^4  - u^5v^2,
\end{gather*}
}
which is the unique one whose Newton polygon
is contained in $\Delta$ and has multiplicity
$5$ at $(1,1)$. Its strict transform in 
$X_\Delta$ is smooth of genus $1$ and 
self-intersection $-1$.
\end{remark}

The proof of Proposition~\ref{pro:class}
suggests the following definitions for
a pair $(\Delta,m)$.
\begin{definition}
Let $\Delta$ be a lattice polygon, $m$ a positive integer, and set 
$p_a := \frac{1}{2}(\vol(\Delta) 
 - |\partial\Delta\cap\mathbb Z^2|
 + m - m^2)+1$.
We say that $(\Delta,m)$ is:
\begin{itemize}
\item
{\em numerically negative} (resp. {\em non positive}) if $\vol(\Delta) - m^2 < 0$ (resp. $\leq$);
\item a $(-n)$-{\em pair}
if $\vol(\Delta) - m^2 = -n < 0$ and $p_a = 0$;
\item {\em expected}
if $|\Delta\cap \mathbb Z^2| >
{{m+1}\choose{2}}$.
\end{itemize}
\end{definition}

\begin{remark}
\label{rem:neg}
Clearly, if $C$ is an intrinsic negative curve, then the pair $(\Delta,m)$, consisting of the Newton polygon of $C$ and
the multiplicity of $\Gamma = \pi(C)$ at $e$, is numerically negative.
On the other hand, if a pair $(\Delta,m)$ is numerically negative, in general there does not exist an intrinsic negative curve
associated to it. Indeed, first of all it can happen that $\ls_\Delta(m)$
is empty (see Example~\ref{ex:empty}).
Furthermore, even if $(\Delta,m)$ is expected (so that $\ls_\Delta(m)$ is not empty), in some cases it contains only reducible curves (see Example~\ref{ex:red}). 
\end{remark}

\begin{example}
\label{ex:empty}
Consider the following polygon $\Delta$ 
\begin{center}
\begin{tikzpicture}[scale=.3]
\tkzInit[xmax=4,ymax=4]\tkzGrid[help lines]
 \tkzDefPoint(0,0){P1}
 \tkzDefPoint(1,4){P2}
 \tkzDefPoint(2,4){P3}
 \tkzDefPoint(4,3){P4}
 \tkzDefPoint(1,1){Q1}
 \tkzDrawPoints[size=2](Q1)
 \tkzDefPoint(2,4){Q2}
 \tkzDrawPoints[size=2](Q2)
 \tkzDefPoint(2,3){Q3}
 \tkzDrawPoints[size=2](Q3)
 \tkzDefPoint(2,2){Q4}
 \tkzDrawPoints[size=2](Q4)
 \tkzDefPoint(3,3){Q5}
 \tkzDrawPoints[size=2](Q5)
 \tkzDefPoint(4,3){Q6}
 \tkzDrawPoints[size=2](Q6)
 \tkzDefPoint(1,4){Q7}
 \tkzDrawPoints[size=2](Q7)
 \tkzDefPoint(0,0){Q8}
 \tkzDrawPoints[size=2](Q8)
 \tkzDefPoint(1,3){Q9}
 \tkzDrawPoints[size=2](Q9)
 \tkzDefPoint(1,2){Q10}
 \tkzDrawPoints[size=2](Q10)
 \tkzDrawSegments[color=black](P1,P2)
 \tkzDrawSegments[color=black](P2,P3)
 \tkzDrawSegments[color=black](P3,P4)
 \tkzDrawSegments[color=black](P1,P4)
\end{tikzpicture}
\end{center}
We have that $\vol(\Delta) = 14$ and $|\partial\Delta\cap\mathbb Z^2| = 4$, 
so that the pair $(\Delta,4)$ is numerically a $(-2)$-pair.
A direct computation shows that $\ls_\Delta(4) = \emptyset$,
so that there does not exists an intrinsic $(-2)$-curve 
with Newton polygon $\Delta$ and multiplicity $4$.
\end{example}

\begin{example}
\label{ex:red}
The polygon $\Delta$, whose Minkowski
decomposition $\Delta = \Delta_1 +
\Delta_2$ is given in the below picture
\begin{center}
\begin{tikzpicture}[scale=.3]
\tkzInit[xmax=7,ymax=7]\tkzGrid[help lines]
 \tkzDefPoint(0,0){P1}
 \tkzDefPoint(3,1){P2}
 \tkzDefPoint(7,3){P3}
 \tkzDefPoint(7,4){P4}
 \tkzDefPoint(6,5){P5}
 \tkzDefPoint(3,7){P6}
 \tkzDefPoint(2,5){P7}
 \tkzDefPoint(2,3){Q1}
 \tkzDrawPoints[size=2](Q1)
 \tkzDefPoint(2,2){Q2}
 \tkzDrawPoints[size=2](Q2)
 \tkzDefPoint(2,1){Q3}
 \tkzDrawPoints[size=2](Q3)
 \tkzDefPoint(3,7){Q4}
 \tkzDrawPoints[size=2](Q4)
 \tkzDefPoint(7,4){Q5}
 \tkzDrawPoints[size=2](Q5)
 \tkzDefPoint(5,5){Q6}
 \tkzDrawPoints[size=2](Q6)
 \tkzDefPoint(3,6){Q7}
 \tkzDrawPoints[size=2](Q7)
 \tkzDefPoint(7,3){Q8}
 \tkzDrawPoints[size=2](Q8)
 \tkzDefPoint(5,4){Q9}
 \tkzDrawPoints[size=2](Q9)
 \tkzDefPoint(0,0){Q10}
 \tkzDrawPoints[size=2](Q10)
 \tkzDefPoint(3,5){Q11}
 \tkzDrawPoints[size=2](Q11)
 \tkzDefPoint(5,3){Q12}
 \tkzDrawPoints[size=2](Q12)
 \tkzDefPoint(3,4){Q13}
 \tkzDrawPoints[size=2](Q13)
 \tkzDefPoint(5,2){Q14}
 \tkzDrawPoints[size=2](Q14)
 \tkzDefPoint(3,3){Q15}
 \tkzDrawPoints[size=2](Q15)
 \tkzDefPoint(3,2){Q16}
 \tkzDrawPoints[size=2](Q16)
 \tkzDefPoint(3,1){Q17}
 \tkzDrawPoints[size=2](Q17)
 \tkzDefPoint(1,2){Q18}
 \tkzDrawPoints[size=2](Q18)
 \tkzDefPoint(6,5){Q19}
 \tkzDrawPoints[size=2](Q19)
 \tkzDefPoint(1,1){Q20}
 \tkzDrawPoints[size=2](Q20)
 \tkzDefPoint(4,6){Q21}
 \tkzDrawPoints[size=2](Q21)
 \tkzDefPoint(6,4){Q22}
 \tkzDrawPoints[size=2](Q22)
 \tkzDefPoint(4,5){Q23}
 \tkzDrawPoints[size=2](Q23)
 \tkzDefPoint(6,3){Q24}
 \tkzDrawPoints[size=2](Q24)
 \tkzDefPoint(4,4){Q25}
 \tkzDrawPoints[size=2](Q25)
 \tkzDefPoint(2,5){Q26}
 \tkzDrawPoints[size=2](Q26)
 \tkzDefPoint(4,3){Q27}
 \tkzDrawPoints[size=2](Q27)
 \tkzDefPoint(2,4){Q28}
 \tkzDrawPoints[size=2](Q28)
 \tkzDefPoint(4,2){Q29}
 \tkzDrawPoints[size=2](Q29)
 \tkzDrawSegments[color=black](P1,P7)
 \tkzDrawSegments[color=black](P6,P7)
 \tkzDrawSegments[color=black](P5,P6)
 \tkzDrawSegments[color=black](P4,P5)
 \tkzDrawSegments[color=black](P3,P4)
 \tkzDrawSegments[color=black](P2,P3)
 \tkzDrawSegments[color=black](P1,P2)
 \node at (8.5,3.5) {=};
 \begin{scope}[xshift=10cm,yshift=2.5cm]
\tkzInit[xmax=2,ymax=2]\tkzGrid[help lines]
 \tkzDefPoint(0,0){P1}
 \tkzDefPoint(2,1){P2}
 \tkzDefPoint(1,2){P3}
 \tkzDefPoint(1,1){Q1}
 \tkzDrawPoints[size=2](Q1)
 \tkzDefPoint(2,1){Q2}
 \tkzDrawPoints[size=2](Q2)
 \tkzDefPoint(0,0){Q3}
 \tkzDrawPoints[size=2](Q3)
 \tkzDefPoint(1,2){Q4}
 \tkzDrawPoints[size=2](Q4)
 \tkzDrawSegments[color=black](P1,P3)
 \tkzDrawSegments[color=black](P2,P3)
 \tkzDrawSegments[color=black](P1,P2)
 \node at (3.5,1) {+};
\end{scope}

\begin{scope}[xshift=15cm,yshift=1cm]
\tkzInit[xmax=5,ymax=5]\tkzGrid[help lines]
 \tkzDefPoint(0,0){P1}
 \tkzDefPoint(3,1){P2}
 \tkzDefPoint(5,2){P3}
 \tkzDefPoint(5,3){P4}
 \tkzDefPoint(2,5){P5}
 \tkzDefPoint(2,2){Q1}
 \tkzDrawPoints[size=2](Q1)
 \tkzDefPoint(4,3){Q2}
 \tkzDrawPoints[size=2](Q2)
 \tkzDefPoint(2,4){Q3}
 \tkzDrawPoints[size=2](Q3)
 \tkzDefPoint(1,1){Q4}
 \tkzDrawPoints[size=2](Q4)
 \tkzDefPoint(3,2){Q5}
 \tkzDrawPoints[size=2](Q5)
 \tkzDefPoint(5,3){Q6}
 \tkzDrawPoints[size=2](Q6)
 \tkzDefPoint(3,4){Q7}
 \tkzDrawPoints[size=2](Q7)
 \tkzDefPoint(2,1){Q8}
 \tkzDrawPoints[size=2](Q8)
 \tkzDefPoint(0,0){Q9}
 \tkzDrawPoints[size=2](Q9)
 \tkzDefPoint(4,2){Q10}
 \tkzDrawPoints[size=2](Q10)
 \tkzDefPoint(2,3){Q11}
 \tkzDrawPoints[size=2](Q11)
 \tkzDefPoint(2,5){Q12}
 \tkzDrawPoints[size=2](Q12)
 \tkzDefPoint(3,1){Q13}
 \tkzDrawPoints[size=2](Q13)
 \tkzDefPoint(5,2){Q14}
 \tkzDrawPoints[size=2](Q14)
 \tkzDefPoint(3,3){Q15}
 \tkzDrawPoints[size=2](Q15)
 \tkzDefPoint(1,2){Q16}
 \tkzDrawPoints[size=2](Q16)
 \tkzDrawSegments[color=black](P1,P5)
 \tkzDrawSegments[color=black](P4,P5)
 \tkzDrawSegments[color=black](P3,P4)
 \tkzDrawSegments[color=black](P2,P3)
 \tkzDrawSegments[color=black](P1,P2)
\end{scope}

\end{tikzpicture}
\end{center}
has $\vol(\Delta) = 48$, $|\partial\Delta\cap\mathbb Z^2| = 8$ 
and
$|\Delta\cap\mathbb Z^2| = {{7+1}\choose{2}} + 1$, so that
$(\Delta,7)$ is an expected $(-1)$-pair. The only
element in $\ls_\Delta(7)$ is
defined by
{\small
\begin{gather*}
(u^2 v + u v^2 - 3 u v + 1)\cdot
    (u^5 v^3 - 2 u^5 v^2 - 6 u^4 v^3 + 11 u^4 v^2 - 2 u^3 v^4 + 17 u^3 v^3 -\\ 
        24 u^3 v^2 - u^3 v - u^2 v^5 + 7 u^2 v^4 - 22 u^2 v^3 + 21 u^2 v^2 + 
        5 u^2 v + 4 u v^2 - 9 u v + 1).
\end{gather*}
}
The factorization implies the Minkowski
decomposition $\Delta = \Delta_1 + \Delta_2$. 
The polygon $\Delta_1$ corresponds to an intrinsic $(-1)$-curve $C_1$, while 
$\Delta_2$ corresponds to an intrinsic
curve $C_2$ of self-intersection $0$ and genus $1$.
By Proposition~\ref{pro:int} it follows
that $C_1\cdot C_2 = 0$.
\end{example}

\begin{remark}
\label{rem:neg-int}
Finally we observe that if the pair $(\Delta,m)$ is numerically non positive and $\ls_\Delta(m)$ contains an irreducible curve $\Gamma$, then a Laurent polynomial $f$ of $\Gamma\cap (\mathbb K^*)^2$ defines an intrinsic non positive curve. Indeed, by definition the strict transform $C\subseteq X_\Delta$ of $\Gamma$ satisfies $C^2 = \vol(\Delta) - m^2 \leq 0$. Moreover, since the Newton polygon $\Delta'$ of $f$ is contained in $\Delta$, we also have that $\vol(\Delta') - m^2 \leq \vol(\Delta) - m^2 \leq 0$.
We remark that if $\Delta'$ is strictly contained in $\Delta$, then the self intersection of the intrinsic curve is strictly smaller than $C^2$
(see Example~\ref{ex:-1-2}).

\end{remark}

\begin{example}
\label{ex:-1-2}
Let $\Delta$ and $\Delta'$ be the 
following polygons, respectively from
left to right
\begin{center}
\begin{tikzpicture}[scale=.3]
\tkzInit[xmax=6,ymax=6]\tkzGrid[help lines]
 \tkzDefPoint(0,0){P1}
 \tkzDefPoint(1,0){P2}
 \tkzDefPoint(4,1){P3}
 \tkzDefPoint(6,2){P4}
 \tkzDefPoint(6,3){P5}
 \tkzDefPoint(4,6){P6}
 \tkzDefPoint(3,5){P7}
 \tkzDefPoint(2,3){Q1}
 \tkzDrawPoints[size=2](Q1)
 \tkzDefPoint(4,1){Q2}
 \tkzDrawPoints[size=2](Q2)
 \tkzDefPoint(2,2){Q3}
 \tkzDrawPoints[size=2](Q3)
 \tkzDefPoint(2,1){Q4}
 \tkzDrawPoints[size=2](Q4)
 \tkzDefPoint(5,4){Q5}
 \tkzDrawPoints[size=2](Q5)
 \tkzDefPoint(0,0){Q6}
 \tkzDrawPoints[size=2](Q6)
 \tkzDefPoint(3,5){Q7}
 \tkzDrawPoints[size=2](Q7)
 \tkzDefPoint(5,3){Q8}
 \tkzDrawPoints[size=2](Q8)
 \tkzDefPoint(3,4){Q9}
 \tkzDrawPoints[size=2](Q9)
 \tkzDefPoint(5,2){Q10}
 \tkzDrawPoints[size=2](Q10)
 \tkzDefPoint(3,3){Q11}
 \tkzDrawPoints[size=2](Q11)
 \tkzDefPoint(3,2){Q12}
 \tkzDrawPoints[size=2](Q12)
 \tkzDefPoint(3,1){Q13}
 \tkzDrawPoints[size=2](Q13)
 \tkzDefPoint(1,1){Q14}
 \tkzDrawPoints[size=2](Q14)
 \tkzDefPoint(4,6){Q15}
 \tkzDrawPoints[size=2](Q15)
 \tkzDefPoint(4,5){Q16}
 \tkzDrawPoints[size=2](Q16)
 \tkzDefPoint(1,0){Q17}
 \tkzDrawPoints[size=2](Q17)
 \tkzDefPoint(6,3){Q18}
 \tkzDrawPoints[size=2](Q18)
 \tkzDefPoint(4,4){Q19}
 \tkzDrawPoints[size=2](Q19)
 \tkzDefPoint(6,2){Q20}
 \tkzDrawPoints[size=2](Q20)
 \tkzDefPoint(4,3){Q21}
 \tkzDrawPoints[size=2](Q21)
 \tkzDefPoint(4,2){Q22}
 \tkzDrawPoints[size=2](Q22)
 \tkzDrawSegments[color=black](P1,P7)
 \tkzDrawSegments[color=black](P6,P7)
 \tkzDrawSegments[color=black](P5,P6)
 \tkzDrawSegments[color=black](P4,P5)
 \tkzDrawSegments[color=black](P3,P4)
 \tkzDrawSegments[color=black](P2,P3)
 \tkzDrawSegments[color=black](P1,P2)

\begin{scope}[xshift=12cm]
\tkzInit[xmax=6,ymax=6]\tkzGrid[help lines]
 \tkzDefPoint(0,0){P1}
 \tkzDefPoint(4,1){P2}
 \tkzDefPoint(6,2){P3}
 \tkzDefPoint(6,3){P4}
 \tkzDefPoint(4,6){P5}
 \tkzDefPoint(3,5){P6}
 \tkzDefPoint(2,3){Q1}
 \tkzDrawPoints[size=2](Q1)
 \tkzDefPoint(4,1){Q2}
 \tkzDrawPoints[size=2](Q2)
 \tkzDefPoint(2,2){Q3}
 \tkzDrawPoints[size=2](Q3)
 \tkzDefPoint(2,1){Q4}
 \tkzDrawPoints[size=2](Q4)
 \tkzDefPoint(5,4){Q5}
 \tkzDrawPoints[size=2](Q5)
 \tkzDefPoint(0,0){Q6}
 \tkzDrawPoints[size=2](Q6)
 \tkzDefPoint(3,5){Q7}
 \tkzDrawPoints[size=2](Q7)
 \tkzDefPoint(5,3){Q8}
 \tkzDrawPoints[size=2](Q8)
 \tkzDefPoint(3,4){Q9}
 \tkzDrawPoints[size=2](Q9)
 \tkzDefPoint(5,2){Q10}
 \tkzDrawPoints[size=2](Q10)
 \tkzDefPoint(3,3){Q11}
 \tkzDrawPoints[size=2](Q11)
 \tkzDefPoint(3,2){Q12}
 \tkzDrawPoints[size=2](Q12)
 \tkzDefPoint(3,1){Q13}
 \tkzDrawPoints[size=2](Q13)
 \tkzDefPoint(1,1){Q14}
 \tkzDrawPoints[size=2](Q14)
 \tkzDefPoint(4,6){Q15}
 \tkzDrawPoints[size=2](Q15)
 \tkzDefPoint(4,5){Q16}
 \tkzDrawPoints[size=2](Q16)
 \tkzDefPoint(6,3){Q17}
 \tkzDrawPoints[size=2](Q17)
 \tkzDefPoint(4,4){Q18}
 \tkzDrawPoints[size=2](Q18)
 \tkzDefPoint(6,2){Q19}
 \tkzDrawPoints[size=2](Q19)
 \tkzDefPoint(4,3){Q20}
 \tkzDrawPoints[size=2](Q20)
 \tkzDefPoint(4,2){Q21}
 \tkzDrawPoints[size=2](Q21)
 \tkzDrawSegments[color=black](P1,P6)
 \tkzDrawSegments[color=black](P5,P6)
 \tkzDrawSegments[color=black](P4,P5)
 \tkzDrawSegments[color=black](P3,P4)
 \tkzDrawSegments[color=black](P2,P3)
 \tkzDrawSegments[color=black](P1,P2)
\end{scope}
\end{tikzpicture}
\end{center}
One has $\vol(\Delta) = 35,\ |\partial\Delta\cap\mathbb Z^2| = 7$ and $|\Delta\cap\mathbb Z^2| = {{6+1}\choose{2}} + 1$, so that $(\Delta,6)$ is an expected $(-1)$-pair.
The linear system $\ls_\Delta(6)$
contains a unique irreducible curve
defined by the following polynomial
{\small
\begin{gather*}
f := -4 {u^6 v^3} + 3 {u^6 v^2} - 
6 u^5 v^4 + 30 u^5 v^3 - 18 u^5 v^2 - {u^4 v^6} + 2 u^4 v^5 + 17 u^4 v^4 - 
62 u^4 v^3 + 25 u^4 v^2\\ + 4 {u^4 v} + 
4 {u^3 v^5} - 26 u^3 v^4 + 50 u^3 v^3 + 
2 u^3 v^2 - 10 u^3 v + 6 u^2 v^3 - 27 u^2 v^2 +  
6 u^2 v + 6 u v - {1}.
\end{gather*}
}
The Newton polygon of $f$ is
$\Delta'$, since $u$ is the only
monomial (corresponding to a lattice
point of $\Delta$) that does not appear in $f$.
In particular, $\vol(\Delta') = 34$ and $|\partial\Delta\cap\mathbb Z^2| = 6$, so 
that $f$ defines an intrinsic (unexpected) $(-2)$-curve.
\end{example}

\begin{definition}
Given two polygons $\Delta_1$,
$\Delta_2$ their {\em mixed volume} is:
\[
 \vol(\Delta_1,\Delta_2) 
 :=
 \frac{1}{2}(\vol(\Delta_1+\Delta_2)
 - \vol(\Delta_1) - \vol(\Delta_2)).
\]
\end{definition}
We conclude the section by showing
how the mixed volume of two lattice
polygons relates with the intersection
product of curves on a toric variety
whose fan refines the normal fans
of the two polygons.

\begin{proposition}
\label{pro:int}
Let $(\Delta_1,m_1)$, $(\Delta_2,m_2)$,
be two pairs, each of which consists of
a lattice polygon together with a
positive integer. Assume that
$\ls_{\Delta_i}(m_i)$ is non-empty
and let $C_i\subseteq X_{\Delta_i}$ be the strict transform of a curve in the
linear system. Let $X$ be a surface which admits birational morphisms
$\phi_i\colon X\to X_{\Delta_i}$ for
$i=1,2$. Then 
\[
 \phi_1^*(C_1)\cdot \phi_2^*(C_2) 
 = 
 \vol(\Delta_1,\Delta_2)-m_1m_2.
\]
\end{proposition}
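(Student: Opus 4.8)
The plan is to reduce the statement to a computation on a single common resolution and then to the known formula for a single intrinsic curve.

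\medskip

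\textbf{Step 1: Reduce to the toric blow-up.} First I would take a smooth complete toric surface $Y$ whose fan refines both normal fans, so there are toric birational morphisms $Y\to \pp_{\Delta_1}$ and $Y\to \pp_{\Delta_2}$; then let $\tilde X := \Bl_e Y$, which admits birational morphisms $\psi_i\colon \tilde X\to X_{\Delta_i}$ compatible with the blow-ups at $e$. Since the intersection product is a birational invariant for pullbacks of divisors (the projection formula: if $\rho\colon \tilde X\to X$ is birational then $\rho^*A\cdot\rho^*B = A\cdot B$), it suffices to prove the identity $\psi_1^*(C_1)\cdot\psi_2^*(C_2) = \vol(\Delta_1,\Delta_2) - m_1 m_2$ on $\tilde X$; the statement for an arbitrary $X$ dominating both $X_{\Delta_i}$ then follows by pulling back further to a common resolution of $X$ and $\tilde X$ and using the projection formula again. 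So from now on I work on $\tilde X = \Bl_e Y$ with exceptional divisor $E$ and structure map $\pi\colon\tilde X\to Y$.

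\medskip

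\textbf{Step 2: Identify the classes.} On $Y$ the curve $\Gamma_i$ (closure in $\pp_{\Delta_i}$ of $V(f_i)$, pulled back to $Y$) is linearly equivalent to the nef divisor $H_i$ associated to $\Delta_i$, in the sense that its class is the toric divisor $\sum_\rho a_\rho^{(i)} D_\rho$ where $a_\rho^{(i)} = -\min_{w\in\Delta_i} w\cdot v_\rho$; and on $\tilde X$ the strict transform is $\psi_i^*(C_i) \equiv \pi^* H_i - m_i E$ (this is exactly the local description of a curve of multiplicity $m_i$ at $e$ used in the derivation of \eqref{equ:num}). Hence
\[
\psi_1^*(C_1)\cdot\psi_2^*(C_2) = (\pi^*H_1 - m_1 E)\cdot(\pi^*H_2 - m_2 E) = H_1\cdot_Y H_2 - m_1 m_2,
\]
using $\pi^*H_1\cdot\pi^*H_2 = H_1\cdot_Y H_2$, $\pi^*H_i\cdot E = 0$, and $E^2 = -1$.

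\medskip

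\textbf{Step 3: Mixed volume equals the intersection of the polytope divisors.} It remains to show $H_1\cdot_Y H_2 = \vol(\Delta_1,\Delta_2)$. This is the bilinear extension of the classical fact $H_\Delta^2 = \vol(\Delta)$: indeed $H_{\Delta_1+\Delta_2}$ is (the $Y$-pullback of) $H_1 + H_2$ because the support function of a Minkowski sum is the sum of the support functions, so
\[
\vol(\Delta_1 + \Delta_2) = (H_1 + H_2)^2 = H_1^2 + 2 H_1\cdot H_2 + H_2^2 = \vol(\Delta_1) + 2 H_1\cdot H_2 + \vol(\Delta_2),
\]
and comparing with the definition of $\vol(\Delta_1,\Delta_2)$ gives $H_1\cdot_Y H_2 = \vol(\Delta_1,\Delta_2)$. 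Combining Steps 2 and 3 yields the claim.

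\medskip

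The main obstacle is not any one computation but making sure the reductions in Step 1 are clean: one must check that $Y$ and the blow-up $\tilde X$ genuinely dominate both $X_{\Delta_i}$ (which requires the common refinement of fans to be taken \emph{before} blowing up, and the point $e$ to be the same general point in all models), and that ``$\Gamma_i \equiv H_i$'' and ``strict transform $\equiv \pi^*H_i - m_iE$'' hold at the level of Cartier divisor classes on the possibly singular $\pp_{\Delta_i}$ — here one uses that $e$ lies in the smooth torus, so everything in a neighbourhood of $e$ is as on a smooth toric surface, and that $|H_i|$ is the relevant very ample linear system so $\Gamma_i\in|H_i|$ as already noted after \eqref{equ:num}. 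Once these identifications are granted, the intersection-theoretic computation is the three-line calculation above.
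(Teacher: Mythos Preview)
Your proof is correct. Both you and the paper first argue that the intersection number is independent of the choice of dominating surface $X$ (via the projection formula), so one may compute on a convenient model. The difference lies in how the toric intersection number $H_1\cdot H_2$ is identified with the mixed volume. The paper works on $X_{\Delta_1+\Delta_2}$, moves the two curves to general members of $|H_1|$ and $|H_2|$ using Bertini so that they meet transversely inside the torus, and then invokes the Bernstein--Kushnirenko theorem to count these torus intersection points as $\vol(\Delta_1,\Delta_2)$; the $-m_1m_2$ correction then comes from the intersection at $e$. You instead pass to a smooth common refinement $Y$, identify $\psi_i^*(C_i)\equiv \pi^*H_i - m_iE$ directly as divisor classes, and obtain $H_1\cdot H_2 = \vol(\Delta_1,\Delta_2)$ by polarizing the identity $H_\Delta^2=\vol(\Delta)$ via $H_{\Delta_1+\Delta_2}=H_1+H_2$. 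Your route is slightly more algebraic and sidesteps Bertini; the paper's route is more geometric and makes the origin of the mixed volume as a torus point count explicit. Both rely on the same underlying toric fact, just accessed from different ends.
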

\begin{proof}
Let $\pi_i\colon X_{\Delta_i}\to
\mathbb P_{\Delta_i}$ be the blowing-up
at $e\in\mathbb P_{\Delta_i}$ 
with exceptional divisor $E_i$.
Since $C_i\sim
\pi_i^*H_i-m_iE_i$, where $H_i$ 
is a very ample divisor on $\mathbb P_{\Delta_i}$, we can
assume that the support of the divisor 
$\pi_i^*H_i-m_iE_i$ does not contain any singular point of $X_{\Delta_i}$. Thus
$C_i$ is a Cartier divisor of $X_{\Delta_i}$ and the pullback $\phi_i^*$
is defined on $C_i$.
The intersection product $\phi_1^*(C_1)\cdot \phi_2^*(C_2)$
does not depend on the surface $X$
because all such surfaces differ by 
exceptional divisors, which have 
zero intersection product with the 
pullbacks of $C_1$ and $C_2$.
We can then choose $X := X_\Delta$ to
be the blowing-up of $\mathbb P_\Delta$ at the general point $e$, where 
$\Delta := \Delta_1+\Delta_2$.
Since $H_i$ is very ample on 
$\mathbb P_{\Delta_i}$, its pullback 
is base point free in $\mathbb P_\Delta$.
By Bertini's theorem the general 
elements of these two linear systems intersect transversely 
at distinct points which, without
loss of generality, we can assume to be contained in $(\mathbb K^*)^2$.
By Bernstein-Kushnirenko theorem 
the number of these intersections
is $\vol(\Delta_1,\Delta_2)$, so that 
the statement follows after taking into
account the intersections of the two 
curves at $e\in\mathbb P_\Delta$.
\end{proof}

\section{Infinite families}
 \label{s:inf}
In this section we construct infinite
families of intrinsic negative curves.
First of all we produce infinite families of toric surfaces, each of which corresponds to an intrinsic negative curve.
Then, in Example~\ref{ex:inf}, we construct 
an infinite family of intrinsic negative curves
on a given toric surface.

\begin{lemma}
\label{lem:mult}
Let $f_1,f_2,f_3,f_4\in\mathbb K[t]$ and let 
$m$ be the maximal degree of the 
four polynomials. Assume that 
$f_1 - f_2 = f_4 - f_3$, 
${\rm Gcd}(f_1,f_2) = {\rm Gcd}(f_3,f_4) =
1$, and $\deg(f_1-f_2) = m$.
Then the image of the following rational map 
\[
 \xymatrix@1{
\pp^1\ar@{..>}[r] & (\mathbb K^*)^2,
 }
 \qquad
 t \mapsto  
 (f_1/f_2,f_3/f_4)
\]
has multiplicity $m$ at $(1,1)$.
\end{lemma}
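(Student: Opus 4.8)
The plan is to parametrise the curve by $t$ and reduce the multiplicity computation at $(1,1)$ to a statement about the order of vanishing of the map near the point(s) where it takes the value $(1,1)$. First I would observe that the coordinates $x = f_1/f_2$ and $y = f_3/f_4$ both equal $1$ exactly at the common roots of $f_1-f_2$ (equivalently of $f_4-f_3$), since $x-1 = (f_1-f_2)/f_2$ and $y-1 = (f_4-f_3)/f_4 = (f_1-f_2)/f_4$. Let $\Gamma$ be the image curve in $(\mathbb K^*)^2$ and let $\Gamma'$ be its compactification; the multiplicity $\mult_{(1,1)}(\Gamma)$ that enters the definition of the intrinsic curve is the sum over the branches of $\Gamma$ through $(1,1)$ of their local multiplicities, which in turn can be read off from the parametrisation $t\mapsto(x(t),y(t))$ by looking at the preimages of $(1,1)$.

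Next I would make the branch structure explicit. Since $f_1$ and $f_2$ are coprime, $f_2$ does not vanish at any root of $f_1-f_2$, and likewise (using $f_4 - f_3 = f_1-f_2$ and coprimality considerations) $f_4$ is controlled; the key input is $\deg(f_1-f_2)=m$, so $f_1-f_2$ has $m$ roots counted with multiplicity in $\mathbb K$. At a root $t_0$ of $f_1-f_2$ of multiplicity $\mu$, both $x-1$ and $y-1$ vanish to order exactly $\mu$ in the local parameter $t-t_0$ (because the denominators $f_2, f_4$ are nonzero there — one needs to check $f_4(t_0)\neq 0$, which follows since $f_4(t_0) - f_3(t_0) = f_1(t_0)-f_2(t_0) = 0$ would force $f_3(t_0)=f_4(t_0)$, and the value at $(1,1)$ is genuinely attained, contradicting nothing; more carefully one argues $f_4$ and $f_1-f_2$ share no root using coprimality of $f_1,f_2$). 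Thus each such $t_0$ gives a branch of $\Gamma$ at $(1,1)$ whose image has local intersection with a general line through $(1,1)$ equal to $\mu$, and the total over all roots is $\sum \mu = \deg(f_1-f_2) = m$. Finally, one must confirm there are no further branches of the compactified curve through $(1,1)$ coming from $t=\infty$: this is where $\deg(f_1-f_2)=m$ being the maximal degree enters, ensuring that as $t\to\infty$ the ratios $f_1/f_2$ and $f_3/f_4$ tend to finite nonzero limits not both equal to $1$, or more precisely that $\infty$ does not map to $(1,1)$, so no branch is lost.

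The main obstacle I expect is the bookkeeping at $t=\infty$ and the verification that the image multiplicity really is $\sum_{t_0}\mathrm{ord}_{t_0}(f_1-f_2)$ rather than something smaller — i.e. that the map $\pp^1\dashrightarrow\Gamma$ is birational onto its image, so that multiplicities of the parametrisation transfer faithfully to multiplicities of the curve. If the parametrisation had degree $d>1$ onto $\Gamma$, the multiplicity of $\Gamma$ at $(1,1)$ would be $\frac1d\sum\mu$; the hypothesis that $f_1,f_2$ are coprime with $\deg(f_1-f_2)=m$ should be exactly what rules this out, since $f_1/f_2$ already separates points generically. I would handle this by noting that $f_1/f_2$ is a rational function of degree $m' = \max(\deg f_1,\deg f_2)$ and arguing that its fibres are generically single points on $\Gamma$, or alternatively by a direct Newton-polygon computation: the curve $\Gamma$ has a defining Laurent polynomial whose Newton polygon $\Delta$ satisfies $\mathrm{vol}(\Delta)$ computable from the degrees, and matching $C^2 = \mathrm{vol}(\Delta) - m^2$ with the expected value pins down $m = \mathrm{mult}_{(1,1)}(\Gamma)$. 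Once birationality is in hand, the rest is the local order-of-vanishing computation sketched above.
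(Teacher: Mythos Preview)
Your approach is essentially the paper's: both arguments identify the preimages of $(1,1)$ with the roots of $f_1-f_2$ (there are $m$ of them, counted with multiplicity, since $\deg(f_1-f_2)=m$ and $\mathbb K$ is algebraically closed) and conclude that the image has multiplicity $m$ there. The paper's proof is three sentences long and does not discuss the behaviour at $t=\infty$ or the birationality of the parametrisation; you are being more scrupulous than the paper in raising those points.

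One genuine issue, present in both your write-up and the paper's, deserves flagging. To know that each root $t_0$ of $f_1-f_2$ actually maps to $(1,1)$ you need $f_4(t_0)\neq 0$; equivalently, that $f_3$ and $f_4$ are coprime. The paper simply asserts this (``Since $f_1$ and $f_2$ are coprime and $f_1-f_2=f_4-f_3$, we have that $f_3$ and $f_4$ are coprime too''), and your parenthetical attempt to derive it from coprimality of $f_1,f_2$ is, as you half-acknowledge, not a real argument. In fact the stated hypotheses do \emph{not} force $\gcd(f_3,f_4)=1$: for instance $f_1=t^2$, $f_2=1$, $f_3=t-1$, $f_4=(t-1)(t+2)$ satisfy all the hypotheses with $m=2$, yet $f_3,f_4$ share the factor $t-1$ and the image has multiplicity $1$ at $(1,1)$. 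So the lemma as stated needs the extra hypothesis $\gcd(f_3,f_4)=1$ (which visibly holds for all the explicit $f_i$ in the table used in the proof of Proposition~\ref{prop:inf}, so nothing downstream is affected). Your instinct that something needed checking here was correct; the resolution is to add the hypothesis rather than to try to deduce it.

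Your birationality worry is also legitimate and likewise not addressed in the paper; it too holds in each of the applications (e.g.\ because the computed Newton polygon matches the one predicted for a birational parametrisation), but is not a consequence of the lemma's hypotheses alone.
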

\begin{proof}
Since $\deg(f_1-f_2) = m$ and $\mathbb K$ algebraically closed, it follows that 
$f_1-f_2$ has $m$ roots.
Any root of $f_1-f_2$ is also a root of $f_3-f_4$, so that
we conclude that there are $m$ values of $t$ (counting multiplicities) whose image is the point $(1,1)$.
\end{proof}

\begin{proof}[Proof of Theorem~\ref{prop:inf}]
First of all, each polygon $\Delta$ of type (v) satisfies $\lw(\Delta) = m$, $\vol(\Delta) = m^2$ and $|\partial\Delta\cap\mathbb Z^2| = m$, so that 
the pair $(\Delta,m)$ is numerically $0$ and expected (in particular it has arithmetic genus $1$). Moreover, in~\cite{cltu}*{Section~6} it has been shown that if we set $m = 2k+4$, for each $k\geq 1$ there exists an irreducible curve of genus $1$ whose Newton polygon is $\Delta$. Therefore we are left with cases (i) to (iv), for which 
the arithmetic genus of the pair is $0$.
In these cases, consider the polynomial functions $f_1,f_2,f_3,f_4 := f_1-f_2+f_3$
given in the following table.

{\small
\begin{center}
\begin{longtable}{|c|c|c|c|}
\hline
& $f_1$  & $f_2$ & $f_3$ \\
\hline
&&&\\
(i) & $-1$ & $\displaystyle{\sum_{i=1}^{m} t^i}$ & 
$t^m$\\
&&&\\
\hline
&&&\\
(ii) & $(m-1)t - (m-2)$ & $-(t-1)t^{m-1} $ & $-(t-1)^3
\displaystyle{\left(t^{m-3} + \sum_{i=0}^{m-4}(m-2-i)t^i\right)}$\\
&&&\\
\hline
&&&\\
(iii) &
$
a^{2k-2}(t-1)$
&
$
\frac{t^{2k-3}\left(t-
a^2\right)
\left(t^2-
a^2\right)}{a^2}$
&
$
\frac{t^{2k-1}\left(t-
a^2\right)}
{a^2},
\quad a := \frac{k-1}{k-2}$\\
&&&\\
\hline
&&&\\
(iv) & $2t-1$ & $(1-t)t^{m-1}$
&
$-(t-1)^2\left(\displaystyle\sum_{i=1}^{m-2}t^i - 1\right)$\\
&&&\\
\hline
\end{longtable}
\end{center}
}

These polynomials satisfy the 
hypotheses of Lemma~\ref{lem:mult}, 
so that the image of the map 
$\varphi(t) = (f_1/f_2,f_3/f_4)$ 
has a point of multiplicity $m$ at $e$. In order to conclude we have to show 
that in each case the Newton polygon is
the one given in the first column of
the table within the proposition.
To this aim we will use \cite{ds}*{Thm.~1.1} which, given
a parametric curve $\Gamma\subseteq
(\mathbb K^*)^2$, 
provides a description of the normal
fan of the Newton polygon of $\Gamma$
together with the length of the edges,
in terms of the zeroes of the four
polynomials $f_1,\dots,f_4$.
For the sake of completeness we explain in detail
case (i). In this case the map $\varphi$ is defined by
\[
\varphi(t) = 
\left(-\frac1{t\sum_{i=0}^{m-1}t^i},
-\frac{t^m}{\sum_{i=0}^{m-1}t^i}
\right).
\]
Since $\varphi$ satisfies $\ord_0(\varphi) = (-1,m),\,
\ord_{\infty}(\varphi) = (m,-1)$ and $\ord_{q_i}(\varphi)
= (-1,-1)$, for all the $m-1$ roots $q_1,\dots,q_{m-1}$ of $\sum_{i=0}^{m-1}t^i$, 
these are the only values of $t$ for which $\ord(\varphi)$ does not vanish.
By~\cite{ds}*{Thm.~1.1}, the rays of the normal fan of the Newton polygon of 
$\overline{\varphi(\pp^1)}$ 
are $(-1,m),\, (m,-1)$ and $(-1,-1)$.
Moreover, the first two rays correspond to two edges of
lattice length $1$ while the third one has length $m-1$. 
We conclude that the Newton polygon 
has vertices $(0,0),\, (m,1),\, (1,m)$.
\end{proof}

\begin{remark}
The  triangles of type (i) in Theorem~\ref{prop:inf} 
are indeed equivalent to the ones with vertices 
$(0,0),\, (m-1,0),\, (m,m+1)$, i.e. $IT(m-1,1)$ in the notation 
of~\cite{ggk}*{Thm.~1.1.A}.
Therefore, as a byproduct of Theorem~\ref{prop:inf} we obtain an alternative (short) proof of~\cite{ggk1}*{Thm.~1.1}.
\end{remark}

Observe that for each infinite family of Theorem~\ref{prop:inf}, the slopes of (some of) the edges change with $m$, so that also the toric surfaces change. We are now going to give an example of an infinite family of negative curves lying on 
the blowing-up at of a fixed toric surface.
First of all we recall a construction 
from~\cite{cltu}. Given an expected 
lattice polygon
$\Delta\subseteq\mathbb Q^2$ of width 
$m := \lw(\Delta)$, with 
$\vol(\Delta) = m^2$ and 
$|\partial\Delta\cap\mathbb Z^2| = m$,
if $\ls_\Delta(m)$ contains an unique
irreducible element, then its strict
transform $C\subseteq X := X_\Delta$ is a 
curve of arithmetic genus one with 
$C^2 = 0$. Whenever $C$ is smooth we denote
by
\[
 \res\colon\Pic(X)\to\Pic(C)
\]
the pullback induced by the inclusion.
It is not difficult to show that the image
of the above map is contained in 
$\Pic(C)(\mathbb Q)$. If
$\res(C)\in\Pic^0(C)$ is non-torsion
then, by~\cite{cltu}*{Sec. 3}, the divisor
$K_X+C$ is linearly equivalent to an
effective divisor whose support can be
contracted by a birational morphism
$\phi\colon X\to Y$.
The surface $Y$ has at most Du Val
singularities and nef anticanonical
divisor $-K_Y\sim C$ (here with abuse
of notation we denote by the same letter
$C$ a curve which lives in different 
birational surfaces and is disjoint from the exceptional locus).

\begin{lemma}
\label{lem:2dim}
If $\Pic(Y)$ has rank three then
$K_Y^\perp\cap \overline{\Eff}(Y)
= \mathbb Q_{\geq 0}\cdot [C]$.
\end{lemma}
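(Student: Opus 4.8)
The plan is to prove both inclusions, the easy one being $\qq_{\geq 0}[C]\subseteq K_Y^\perp\cap\overline{\Eff}(Y)$: the curve $C$ is effective and $C\cdot K_Y=-C^2=0$. For the reverse inclusion, first observe that since $-K_Y\sim C$ is nef, $F:=K_Y^\perp\cap\overline{\Eff}(Y)$ is a face of $\overline{\Eff}(Y)$ containing $[C]$, so it is enough to show that every extremal ray of $F$ equals $\qq_{\geq 0}[C]$. An extremal ray of $F$ is an extremal ray of $\overline{\Eff}(Y)$, hence it is spanned either by $[C]$ or by the class of an irreducible curve $D\neq C$ with $D\cdot C=0$, and in the latter case $D$ is disjoint from $C$. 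The crucial numerical input is the Hodge index theorem: since $\rho(Y)=3$, $C^2=0$ and $C$ is a nonzero nef class, the intersection form restricted to $C^\perp$ is negative semidefinite with radical exactly $\qq[C]$, so $C^\perp/\qq[C]$ is negative definite of rank $1$; in particular any such $D$ has $D^2\leq 0$, with $[D]\in\qq[C]$ when $D^2=0$.

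Next I would dispose of the case $D^2=0$ using that $\res(C)$ is non-torsion. Here $D\sim\lambda C=-\lambda K_Y$ for some $\lambda>0$ (using $h^1(\Osh_Y)=0$), and restricting the exact sequences $0\to\Osh_Y((\mu-1)C)\to\Osh_Y(\mu C)\to\Osh_C(\mu C)\to 0$ to $C$, together with the fact that $\Osh_C(\mu C)=\res(C)^{\otimes\mu}$ is a non-zero element of $\Pic^0(C)$ for every $\mu\geq 1$, yields $h^0(Y,-\mu K_Y)=1$ for all $\mu\geq 1$; since $D$ is irreducible this forces $\lambda=1$ and $D=C$. It then remains to exclude an irreducible curve $D$ with $D\cdot C=0$ and $D^2<0$. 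Passing to the minimal resolution of $Y$ and using adjunction with $D\cdot K_Y=0$ and the Du Val hypothesis, $D$ becomes a $(-2)$-curve; pulling it back along $\phi\colon X=X_\Delta\to Y$ (through the minimal resolution) produces an irreducible smooth rational negative curve $D_X\subseteq X$ with $D_X\cdot C_X=0$ and $D_X^2\leq -2$. Setting $D_0:=\pi(D_X)\subseteq\mathbb P_\Delta$ and $\mu:=\mult_e(D_0)$, the identity $C_X\cdot D_X=0$ becomes $H_\Delta\cdot D_0=m\mu$. If $\mu=0$ this contradicts the ampleness of $H_\Delta$, since $H_\Delta\cdot D_0>0$. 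If $\mu\geq 1$, then $D_0$ is not torus-invariant (a torus-invariant curve cannot pass through the general point $e$), so $D_0^2\geq 0$, since on a toric surface every negative irreducible curve is torus-invariant; but for $\mu=1$ one also has $D_0^2=D_X^2+1\leq -1<0$, a contradiction.

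The main obstacle is the remaining subcase $\mu\geq 2$: an irreducible curve $D_0$ on $\mathbb P_\Delta$ with $0\leq D_0^2\leq \mu^2-2$, multiplicity $\mu\geq 2$ at the general point $e$, and $H_\Delta\cdot D_0=m\mu$. This cannot be ruled out by a naive Bertini/generality argument, and I expect it has to be excluded by invoking either the Seshadri bound of Proposition~\ref{prop:sesh-m} (the existence of such $D_0$, with its strict transform $D_X$ disjoint from $C_X$, would give a curve incompatible with $\varepsilon(H_\Delta,e)=\vol(\Delta)/m$ and $\rho(Y)=3$) or, more likely, the precise structure of the contraction $\phi$ and of $Y$ coming from \cite{cltu}*{Sec.~3}, which should show that $Y$ admits no further Du Val contraction, i.e.\ no $(-2)$-curve orthogonal to $C$. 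Making this last step precise is the heart of the argument; once it is in place, the three cases above combine to give $K_Y^\perp\cap\overline{\Eff}(Y)=\qq_{\geq 0}[C]$.
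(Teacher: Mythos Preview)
Your proposal is incomplete, and you say so yourself: the case $\mu\geq 2$ is left open, and it is precisely the heart of the matter. There is also a technical problem one step earlier. You want to transport the $(-2)$-curve $\tilde D$ on the minimal resolution $Z$ of $Y$ to $X=X_\Delta$, but there is no natural morphism between $X$ and $Z$; the surface $X$ need not be smooth (the toric boundary of $\mathbb P_\Delta$ can be singular) and $\phi\colon X\to Y$ does not factor through $Z$. If instead you take the strict transform $D_X$ directly via $\phi$, you only get $D_X^2\leq D^2<0$, and since $D$ is in general only a Weil divisor on the Du Val surface $Y$ you do not get $D_X^2\leq -2$; so the input you feed into the toric argument (``$D_X^2\leq -2$'') is not justified. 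Even granting it, your toric observation (non-invariant curves on a toric surface have non-negative square) only handles $\mu\leq 1$, and there is no evident way to push it to $\mu\geq 2$ without substantial extra input.

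The paper avoids this case analysis entirely and works on the minimal resolution $Z$ of $Y$, not on $X_\Delta$. It first shows (via $\res(C)$ non-torsion and~\cite{cltu}*{Lem.~3.3}) that $\overline{\Eff}(Y)$ is non-polyhedral; then $Z$ is a smooth rational surface of Picard rank $10$ with nef $-K_Z$ and non-polyhedral $\overline{\Eff}(Z)$. For any effective $D$ on $Y$ with $D\cdot K_Y=0$, its pullback $D'$ on $Z$ satisfies $D'\cdot K_Z=0$, so by adjunction $D'=\sum_i a_iC_i+nC$ with each $C_i$ a $(-2)$-curve (and $\res(C_i)=0$ since $C_i\cdot C=0$ forces $C_i\cap C=\emptyset$). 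The key external input is~\cite{cltu}*{Cor.~3.17}: together with non-polyhedrality of $\overline{\Eff}(Y)$ it forces every such $C_i$ to be $\pi$-exceptional, hence $D=\pi_*D'=nC$. This is exactly the structural result from~\cite{cltu} you were guessing at; your Hodge-index reduction and the $\mu\leq 1$ cases are correct but not needed once this is in place.
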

\begin{proof}
The class $[C]$ spans an extremal ray of 
$\overline{\Eff}(Y)$ because the curves
contracted by $\phi$ are disjoint from $C$
and thus $\res(C)$ is non-torsion also on
$Y$.
As a consequence $\overline{\Eff}(Y)$ is
non-polyhedral by~\cite{cltu}*{Lem. 3.3}.
By~\cite{cltu}*{Lem. 3.14} the minimal
resolution of singularities 
$\pi\colon Z\to Y$
is a smooth rational surface $Z$ of
Picard rank $10$, nef anticanonical class
$-K_Z$ and non-polyhedral effective cone
$\overline{\Eff}(Z)$.
Observe that the root sublattice of 
$\Pic(Z)$ spanned by classes of $(-2)$-curves
over singularities of $Y$ has rank $R = 7$.
Assume now that $D$ is an
effective divisor such that
$D\cdot K_Y = 0$. Then $D$ is push-forward
of an effective divisor $D'$ of $Z$ with
$D'\cdot K_Z = 0$. Since $-K_Z$ is nef, by 
adjunction $D'=\sum_ia_iC_i + nC$, where 
each $C_i$ is a $(-2)$-curve and $a_i,n\geq 0$. By~\cite{cltu}*{Cor. 3.17}, the fact
that $\res(C_i) = 0$ for any $i$ and the
fact that $\overline{\Eff}(Y)$ is non-polyhedral we conclude that all the
$C_i$ are contracted by $\pi$.
Thus $D$ is linearly equivalent to a 
positive multiple of $C$.
\end{proof}
We are now going to consider a particular lattice polygon $\Delta$ satisfying the above conditions (it is number~24 in~\cite{cltu}*{Table~3}) and we are going to show that the blowing up of the corresponding toric surface contains infinitely many negative curves (see Remark~\ref{rem:neg_k}).
\begin{proposition}
\label{ex:inf}
Let $X$ be the blowing-up at a
general point of the toric surface $\pp$, defined by the following lattice polygon
$\Delta\subseteq\mathbb Q^2$
\begin{center}
\begin{tikzpicture}[scale=.3]
\tkzInit[xmax=6,ymax=6]\tkzGrid[help lines]
 \tkzDefPoint(0,0){P1}
 \tkzDefPoint(2,1){P2}
 \tkzDefPoint(5,3){P3}
 \tkzDefPoint(6,4){P4}
 \tkzDefPoint(1,6){P5}
 \tkzDefPoint(0,1){P6}
 \tkzDefPoint(2,3){Q1}
 \tkzDrawPoints[size=2](Q1)
 \tkzDefPoint(2,2){Q2}
 \tkzDrawPoints[size=2](Q2)
 \tkzDefPoint(2,1){Q3}
 \tkzDrawPoints[size=2](Q3)
 \tkzDefPoint(0,1){Q4}
 \tkzDrawPoints[size=2](Q4)
 \tkzDefPoint(5,4){Q5}
 \tkzDrawPoints[size=2](Q5)
 \tkzDefPoint(0,0){Q6}
 \tkzDrawPoints[size=2](Q6)
 \tkzDefPoint(3,5){Q7}
 \tkzDrawPoints[size=2](Q7)
 \tkzDefPoint(1,6){Q8}
 \tkzDrawPoints[size=2](Q8)
 \tkzDefPoint(5,3){Q9}
 \tkzDrawPoints[size=2](Q9)
 \tkzDefPoint(3,4){Q10}
 \tkzDrawPoints[size=2](Q10)
 \tkzDefPoint(1,5){Q11}
 \tkzDrawPoints[size=2](Q11)
 \tkzDefPoint(3,3){Q12}
 \tkzDrawPoints[size=2](Q12)
 \tkzDefPoint(1,4){Q13}
 \tkzDrawPoints[size=2](Q13)
 \tkzDefPoint(3,2){Q14}
 \tkzDrawPoints[size=2](Q14)
 \tkzDefPoint(1,3){Q15}
 \tkzDrawPoints[size=2](Q15)
 \tkzDefPoint(1,2){Q16}
 \tkzDrawPoints[size=2](Q16)
 \tkzDefPoint(1,1){Q17}
 \tkzDrawPoints[size=2](Q17)
 \tkzDefPoint(6,4){Q18}
 \tkzDrawPoints[size=2](Q18)
 \tkzDefPoint(4,4){Q19}
 \tkzDrawPoints[size=2](Q19)
 \tkzDefPoint(2,5){Q20}
 \tkzDrawPoints[size=2](Q20)
 \tkzDefPoint(4,3){Q21}
 \tkzDrawPoints[size=2](Q21)
 \tkzDefPoint(2,4){Q22}
 \tkzDrawPoints[size=2](Q22)
 \tkzDrawSegments[color=black](P1,P6)
 \tkzDrawSegments[color=black](P5,P6)
 \tkzDrawSegments[color=black](P4,P5)
 \tkzDrawSegments[color=black](P3,P4)
 \tkzDrawSegments[color=black](P2,P3)
 \tkzDrawSegments[color=black](P1,P2)
 \tkzLabelSegment[below](P1,P2){\tiny $D_1$}
 \tkzLabelSegment[below](P2,P3){\tiny $D_2$}
 \tkzLabelSegment[below, pos=1](P3,P4){\tiny $D_3$}
 \tkzLabelSegment[above](P4,P5){\tiny $D_4$}
 \tkzLabelSegment[left](P5,P6){\tiny $D_5$}
 \tkzLabelSegment[left](P6,P1){\tiny $D_6$}
\end{tikzpicture}
\end{center}
Then there is a birational morphism 
$\phi\colon X\to Y$ onto a rational surface
$Y$ of Picard rank three with only Du Val 
singularities. Moreover if $D_1,\dots,D_6$ 
are the pullbacks of the prime invariant 
divisors of $\pp$, ordered according to the
picture, the pushforward on $Y$ of the
divisor

{\small
\begin{gather*}
 E_k :=
 (7k^2-1)D_3 
 + (161k^2 - 49k - 9)D_4 
 + (70k^2 - 53k + 9)D_5 \\
 + (14k^2 - 12k + 2)D_6 
 - (42k^2 - 19k)E
\end{gather*}
}
is linearly equivalent to a $(-1)$-curve
for any integer $k\neq 0$.
\end{proposition}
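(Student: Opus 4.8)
The strategy is to produce, for each nonzero integer $k$, an explicit irreducible intrinsic curve on (a blow-up of) $\pp$ whose class on $Y$ is $E_k$, and then verify numerically that $E_k$ is a $(-1)$-class using the combinatorial data. The starting point is the observation, recorded just before the statement, that $\Delta$ satisfies $\vol(\Delta)=m^2$, $|\partial\Delta\cap\mathbb Z^2|=m$ with $m=\lw(\Delta)$, and $\ls_\Delta(m)$ contains a unique irreducible curve $C$, so that $C^2=0$, $p_a(C)=1$, and (since this is entry $24$ of \cite{cltu}*{Table~3}) the restriction $\res(C)\in\Pic^0(C)$ is non-torsion. Hence $K_X+C$ is effective with contractible support, giving the birational morphism $\phi\colon X\to Y$ with $-K_Y\sim C$ nef and $\Pic(Y)$ of rank three; Lemma~\ref{lem:2dim} then applies and gives $K_Y^\perp\cap\overline{\Eff}(Y)=\mathbb Q_{\geq 0}[C]$.

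\smallskip
\noindent First I would pin down the intersection theory on $Y$. The classes $D_3,D_4,D_5,D_6,E$ span (a finite-index sublattice of, or all of) $\Pic(Y)\otimes\mathbb Q$ once the contracted curves $D_1,D_2$ are accounted for; using the fan of $\pp$ (the inward normals to the six edges $D_1,\dots,D_6$) together with the relations $D_i^2$, $D_i\cdot D_{i+1}=1$, $E^2=-1$, $E\cdot D_i=0$, and the two linear relations among the $D_i$ coming from $\sum (\text{ray})\,D_i=0$ in $\Pic(\pp)$, one writes down the Gram matrix. A routine but bookkeeping-heavy computation then yields $E_k^2$ and $E_k\cdot K_Y$ as quadratic polynomials in $k$; the claim is that these come out identically $-1$ and $-1$ respectively, so that $p_a=0$ forces $E_k$ (if effective and irreducible) to be a $(-1)$-curve. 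I would present this as: $E_k^2=-1$ and $E_k\cdot K_Y=-1$ for all $k$, hence any irreducible curve in $|E_k|$ is a smooth rational $(-1)$-curve.

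\smallskip
\noindent The substantive step is exhibiting an actual irreducible curve in the class $E_k$. Here I would mimic the mechanism of Proposition~\ref{prop:inf} and Lemma~\ref{lem:mult}: construct a rational parametrization $t\mapsto(f_1/f_2,\,f_3/f_4)$ with $f_4=f_1-f_2+f_3$, four polynomials depending on $k$, chosen so that (a) by Lemma~\ref{lem:mult} the image has a point of multiplicity $m_k$ at $e=(1,1)$ for the appropriate $m_k$ (which will itself be quadratic in $k$, matching the coefficient $42k^2-19k$ of $-E$ up to the base-point-counting), and (b) by \cite{ds}*{Thm.~1.1} the normal fan of the Newton polygon of the image, with its edge lengths, is exactly the fan giving the class $E_k$ after strict transform and push-forward to $Y$. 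Concretely one reads off from the coefficients of $E_k$ in terms of $D_3,D_4,D_5,D_6$ which rays of the fan of $\pp$ must appear and with what multiplicities, then engineers the zeros and poles of $f_1,f_2,f_3,f_4$ (their orders at $0$, $\infty$, and at the common roots of $f_1-f_2$) to realize precisely those $\ord$-vectors. That the resulting curve is irreducible follows because it is the image of $\pp^1$ under a map that is birational onto its image — which one checks by confirming that the parametrization is generically injective, e.g. by a degree count via \cite{ds}*{Thm.~1.1} showing $\vol(\text{Newton polygon})$ equals the expected self-intersection plus $m_k^2$ with the curve reduced.

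\smallskip
\noindent The main obstacle is the construction in the previous paragraph: finding the explicit family $f_1(t),\dots,f_4(t)$ whose Newton polygon and multiplicity at $(1,1)$ match the prescribed divisor $E_k$. This is a reverse-engineering problem — the coefficients $7k^2-1$, $161k^2-49k-9$, $70k^2-53k+9$, $14k^2-12k+2$, $42k^2-19k$ must be reproduced exactly — and the delicate part is arranging the factor $(t-1)$ (and powers thereof, or a $k$-dependent factor analogous to $a=(k-1)/(k-2)$ in case (iii) of Proposition~\ref{prop:inf}) so that the order of vanishing of $f_1-f_2$ at the relevant points gives the right multiplicity while the orders at $0$ and $\infty$ and at the other roots give the right edge lengths. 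Once such a family is written down, the remaining verifications — the hypotheses of Lemma~\ref{lem:mult}, the Newton-polygon computation via \cite{ds}*{Thm.~1.1}, the identity $E_k^2=E_k\cdot K_Y=-1$, and irreducibility — are each mechanical. I would close by remarking that, since $[E_k]$ moves in the half-space $E_k\cdot K_Y<0$ out of $K_Y^\perp$, Lemma~\ref{lem:2dim} is not contradicted, and that the infinitely many distinct classes $E_k$ (distinct because their coefficients grow with $k$) give infinitely many $(-1)$-curves on the fixed surface $Y$, hence on $X$, proving Remark~\ref{rem:neg_k}.
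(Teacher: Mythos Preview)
Your plan diverges from the paper's at the step you yourself call ``the main obstacle'', and the direction you take there is one the authors explicitly flag as out of reach. You propose to exhibit, for each $k$, an explicit rational parametrization $t\mapsto(f_1/f_2,f_3/f_4)$ whose Newton polygon (via \cite{ds}*{Thm.~1.1}) realizes the class $E_k$. But Remark~\ref{rem:neg_k} says that the authors can only verify this ``for small values of $k$ \dots\ with the help of the software Magma'' and that ``for general $k$ we are not able to prove it.'' So the step you describe as mechanical once the family is written down is in fact an open problem in the paper; moreover it would prove strictly more than the proposition claims, namely irreducibility of $E_k$ on $X$ rather than only of $\phi_*E_k$ on $Y$.

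The paper's actual proof bypasses any explicit construction. After computing $E_k\cdot C_1=E_k\cdot C_2=0$ and $E_k^2=E_k\cdot K_X=-1$ (so that $\phi_*E_k$ is Cartier on $Y$), Riemann--Roch gives that $E_k$ is \emph{effective}. Irreducibility of $\phi_*E_k$ then comes from the single equality $-K_Y\cdot\phi_*E_k=1$: since $-K_Y\sim C$ is nef, in any decomposition all components with $-K_Y$-degree zero lie in $K_Y^\perp\cap\overline{\Eff}(Y)$, hence are multiples of $C$ by Lemma~\ref{lem:2dim}. Thus either $\phi_*E_k$ is irreducible or $\phi_*E_k=D+nC$ with $D$ irreducible and $n>0$; then $D\cdot K_Y=-1$, the genus formula forces $D^2\geq -1$, and $-1=(\phi_*E_k)^2=D^2+2n(D\cdot C)=D^2+2n$ gives a contradiction. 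So Lemma~\ref{lem:2dim} is not a consistency check at the end but the engine of the irreducibility argument --- and you never invoke Riemann--Roch for effectivity at all.

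A smaller correction: the curves contracted by $\phi$ are not the boundary divisors $D_1,D_2$ but the strict transforms $C_1\sim 5D_4+D_5-E$ and $C_2\sim 5D_5+D_6-E$ of the one-parameter subgroups in the two width directions; the paper checks $K_X+C\sim 3C_1+2C_2$ explicitly.
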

\begin{proof}
Let $C$ be the curve of $X$ 
defined by the unique element
in $\ls_\Delta(6)$, which 
has equation
{\small
\begin{gather*}
 f := - {1}+2  {v}+7 u v
 -3  {u^2 v}-23 u v^2+6 u^2 v^2+2 u^3 v^2+18 u v^3+20 u^2 v^3
 -26 u^3 v^3
 +10 u^4 v^3-2\\  {u^5 v^3}
 -12 u v^4-11 u^2 v^4+6 u^3 v^4+5 u^4 v^4-4
 u^5 v^4+ {u^6 v^4}+5 u v^5
 +3 u^2 v^5-2 u^3 v^5- {u v^6}.
\end{gather*}
}
Let us denote by $v_1,\dots,v_6$ the 
primitive generators of the rays of 
the normal fan to $\Delta$, which are
the columns of the following matrix
\[
P:=
 \begin{pmatrix*}[r]
 -1 &-2 &-1 & -2 & 5 & 1\\
  2 & 3 & 1 & -5 & -1 & 0
 \end{pmatrix*}.
\]
By~\eqref{very ample} the
divisor $\pi(C)$ is linearly equivalent to 
$-\sum_{i=1}^6\min_{w\in \Delta}
 \{w\cdot v_i\} \pi(D_i)$ so that
\[
 C\sim D_2 + 2D_3 + 32D_4 + D_5 - 6E.
\]
By~\eqref{equ:num} the curve $C$ has 
self-intersection $C^2=0$ and it is
smooth of genus $1$.
It is isomorphic, over the 
rational numbers, to the elliptic curve
of equation $y^2 + y = x^3 + x^2$,
labelled \href{https://www.lmfdb.org/EllipticCurve/Q/43/a/1}{43.a1} in the LMFDB database.
Its Mordell-Weil group $\Pic^0(C)(\mathbb Q)$ 
is free of rank one so that $\res(C)$ is
either trivial or non-torsion. The first
possibility is ruled out by the fact that
$\dim |C| = \dim\ls_\Delta(6) = 0$ and 
the exact sequence of the ideal sheaf
of $C$ in $X$
(see~\cite{cltu}*{Lem. 3.2} for details).
Thus $\res(C)$ is non-torsion, which implies that $h^0(X,nC) = 1$, for any 
positive integer $n$, so that
$[C]$ spans an extremal ray of 
$\overline{\Eff}(X)$.
By~\cite{cltu}*{Cor. 3.12} the
divisor $K_X+C$ is linearly
equivalent to an effective
divisor whose support can be contracted.
This contraction is the morphism $\phi$
in the statement.
We claim that
\[
 K_X+C\sim 3C_1+2C_2,
\]
where $C_1\sim 5D_4+D_5-E$
and $C_2\sim 5D_5+D_6-E$ are the strict transforms of the two one-parameter
subgroups corresponding to the width directions $(1,0)$ and 
$(0,1)$ of $\Delta$.
To prove the claim it suffices to observe
that the divisor
$K_X+C - 3C_1-2C_2\sim 
-D_1 + D_3 + 16D_4 -13D_5 - 3D_6$
is principal, being a linear
combination of the rows of the 
above matrix $P$.
Using the intersection matrix of 
$D_1,\dots,D_6,E$
\[
 \begin{pmatrix}
 -3/2  &   1 & 0 & 0 & 0 & 1/2 &  0\\
    1  & -1 & 1 & 0 & 0  &  0 & 0\\
    0  &  1 & -16/7 & 1/7 & 0  & 0 & 0\\
    0  &  0 & 1/7 & 4/189 & 1/27 & 0 & 0\\
    0  &  0 &  0  & 1/27 & -5/27  & 1 & 0\\
  1/2  &  0 &  0 & 0  &  1 & -9/2  &  0\\
    0  &  0 &  0  &  0 & 0 & 0 &  -1
\end{pmatrix}
\]
we can see that $C_i\cdot C = 
C_1\cdot C_2 = 0$ for $i=1,2$
and $E_k$ has integer intersection product with all the $D_i$, in particular it is a Cartier divisor. Moreover,
\begin{equation}
\label{equs}
 E_k\cdot C_1 = E_k\cdot C_2 = 0
 \qquad
 E_k^2 = E_k\cdot K = -1,
\end{equation}
so that by Riemann-Roch $E_k$ is effective.
To prove that $\phi_*E_k$ is irreducible
we proceed as follows.
Since $X$ has Picard rank $5$ and 
$\phi\colon X\to Y$ contracts 
$C_1\cup C_2$, the surface $Y$ 
has Picard rank $3$.
The push-forward $\phi_*E_k$ is an effective and Cartier divisor, because we are contracting curves which have intersection product zero with $E_k$. Moreover, since 
$-K_Y$ is nef and
\[
 -K_Y\cdot\phi_*E_k = 1,
\]
we deduce that 
$\phi_*E_k$ is either irreducible or it can be written as $D + D'$, with $D$ irreducible and reduced and $D'$ orthogonal to $K_Y$. By Lemma~\ref{lem:2dim},
$D'$ must be equivalent to a multiple of $C$, so that we can write $\phi_*E_k = D+nC$, for some $n>0$.
Since $D$ is Cartier, both
$D^2$ and $D\cdot K_Y$ are integers,
and moreover, being $-K_Y$ nef, by the genus formula $D^2\geq -2$.
The case $D^2=-2$ can be ruled out
since otherwise $D$ would be in $K_Y^\perp$.
Thus $D^2 = D\cdot K_Y = -1$ and 
$-1 = \phi_*E_k^2 = D^2+2nD\cdot 
C = -1 +2n > 0$ gives again a contradiction.
It follows that $\phi_*E_k$ is linearly 
equivalent to a $(-1)$-curve.
\end{proof}

\begin{remark}
\label{rem:neg_k}
The way we determined the divisors $E_k$
has been by solving the diophantine 
equations~\ref{equs}. 
We also remark that
a priori the curve $E_k$ could be reducible
of the form $E_k = \Gamma_k + a_1(k)C_1+a_2(k)C_2$, 
with $\Gamma_k$ irreducible and $a_1(k),a_2(k)\geq 0$.
So we are only showing the existence of 
infinitely many negative curves $\Gamma_k$
on $X$, which are not necessarily $(-1)$-curves.
Moreover even if $E_k = \Gamma_k$,
the Newton polygon of $E_k$ does not necessarily
coincide with the Riemann-Roch polygon
$\Delta_k$ of the curve $E_k$. So that $E_k$
could be a $(-1)$-curve but not an intrinsic 
one.
The Riemann-Roch polygon has vertices
corresponding to the columns of the following
matrix
\[
 \begin{pmatrix}
 0 & 0 & 7k^2-4k & 14k^2-12k+2 & 35k^2-12k-1 & 42k^2-19k\\
 0 & 7k^2+k & 42k^2 -19k & 7k^2-6k+1 & 21k^2-6k-1 & 28k^2-13k
 \end{pmatrix}.
\]
In particular, if we set $m = 42k^2-19k$, we have that 
$\vol(\Delta_k) = m^2 - 1$, $|\partial\Delta_k\cap \mathbb Z^2| = m +1$
and $\lw(\Delta_k) = m$, so that $(\Delta_k,m)$ is numerically a $(-1)$-pair.
For small values of $k$ it is possible to check, with 
the help of the software Magma~\cite{mag}, that the Newton polygon of
the $(-1)$-curve is indeed $\Delta_k$, but for general 
$k$ we are not able to prove it. Therefore 
$\Gamma_k$ is not necessarily an intrinsic $(-1)$-curve, but it is anyway an intrinsic negative curve (see Remark~\ref{rem:neg}).
\end{remark}

\section{Seshadri constants}
 \label{s:sesh}
In this section we first prove Theorem~\ref{prop:sesh-m} and Corollary~\ref{cor:inf}, and then we discuss some consequences on the study of the effective cone of the blowing up of weighted projective planes.

We will need the following preliminary result about Seshadri constants on projective surfaces.
\begin{lemma}
\label{lem:sesh}
Let $Y$ be a projective surface, 
$H$ an ample divisor of $Y$, and
let $\pi\colon X\to Y$ be the blowing-up 
of $Y$ at a smooth point $p\in Y$ with exceptional
divisor $E$. 
\begin{enumerate}
\item
\label{lem:1}
If there is a positive 
integer $m$ such that $\pi^*H-mE$ is
the class of an effective curve 
$C = \sum_{i=1}^ra_iC_i$ with $C^2\leq 0$,
then
\[
 \varepsilon(H,p) 
 =
 \min_i\left\{\frac{\pi^*H\cdot C_i}{E\cdot C_i}\right\}
 \leq m + \frac{C^2}{m}.
\]
\item
\label{lem:2}
If furthermore $C$ is
irreducible, then $\varepsilon(H,p)
 = m + \frac{C^2}{m}$.
\end{enumerate}
\end{lemma}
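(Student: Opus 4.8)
The plan is to pass to $X$ and use the standard reformulation of the Seshadri constant: since $H$ is ample on $Y$,
\[
 \varepsilon(H,p)\ =\ \max\{\,t\in\rr_{\ge 0}\ :\ \pi^{*}H-tE\in\Nef(X)\,\},
\]
the maximum being attained because $\Nef(X)$ is a closed cone and $\pi^{*}H-tE$ is not nef for $t$ large (there is a curve through $p$). Every irreducible curve of $X$ is $E$ or the strict transform $\widetilde{C}'$ of an irreducible curve $C'$ of $Y$, and $\pi^{*}H\cdot\widetilde{C}'=H\cdot C'$, $E\cdot\widetilde{C}'=\mult_p(C')$; this is why the reformulation holds (see \cite{primer}*{\S4}, \cite{ito}*{\S3.2}), and I will take it for granted. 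Set $\mu:=\min_i\{\pi^{*}H\cdot C_i/E\cdot C_i\}$, the minimum over those $i$ with $E\cdot C_i>0$; such an $i$ exists because $\sum_i a_i\,E\cdot C_i=E\cdot C=m>0$, using $C\sim\pi^{*}H-mE$ and $E^{2}=-1$.

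First I would show $\varepsilon(H,p)\le\mu$: the class $\pi^{*}H-\varepsilon(H,p)E$ is nef, hence non-negative on each irreducible curve $C_i$, which for $E\cdot C_i>0$ says precisely $\varepsilon(H,p)\le\pi^{*}H\cdot C_i/E\cdot C_i$. Then — and this is the step that uses $C^{2}\le 0$ — I would show $\mu\le m+C^{2}/m$, hence $\mu\le m$. For each $i$ one has $C\cdot C_i=\pi^{*}H\cdot C_i-m\,E\cdot C_i\ge(\mu-m)\,E\cdot C_i$ (by the definition of $\mu$ when $E\cdot C_i>0$; by $\pi^{*}H\cdot C_i\ge 0$ when $E\cdot C_i=0$; and reducing to $\mu\ge 0$ if some $C_i$ equals $E$). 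Multiplying by $a_i$, summing over $i$, and using $\sum_i a_i\,E\cdot C_i=m$ gives $C^{2}\ge(\mu-m)m$, i.e.\ $\mu\le m+C^{2}/m\le m$.

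Next I would establish $\varepsilon(H,p)\ge\mu$ by checking that $\pi^{*}H-\mu E$ is nef. Against $E$ it gives $\mu\ge 0$; against each $C_i$ it is non-negative by the choice of $\mu$ (or by $\pi^{*}H\cdot C_i\ge 0$ when $E\cdot C_i=0$); and against any remaining irreducible curve $\Gamma$ — necessarily a component neither of $C=\sum_i a_iC_i$ nor of $E$, so $C\cdot\Gamma\ge 0$ and $E\cdot\Gamma\ge 0$ — we get, using $\mu\le m$,
\[
 (\pi^{*}H-\mu E)\cdot\Gamma\ =\ \pi^{*}H\cdot\Gamma-\mu\,E\cdot\Gamma\ \ge\ \pi^{*}H\cdot\Gamma-m\,E\cdot\Gamma\ =\ C\cdot\Gamma\ \ge\ 0.
\]
Hence $\pi^{*}H-\mu E\in\Nef(X)$ and $\varepsilon(H,p)=\mu\le m+C^{2}/m$, which is part \ref{lem:1}. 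For part \ref{lem:2}, if $C$ is irreducible then $r=1$, $a_1=1$, $C_1=C$, so $\mu=\pi^{*}H\cdot C/E\cdot C=H^{2}/m=(C^{2}+m^{2})/m=m+C^{2}/m$.

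The main obstacle is the inequality $\mu\le m$ in the second paragraph: without it the estimate against a general curve $\Gamma$ in the third paragraph collapses, and it is exactly there that $C^{2}\le 0$ is indispensable. Everything else is bookkeeping; the one point requiring care is that the inequalities above must be phrased so as to remain valid for components $C_i$ contracted by $\pi$ (those with $E\cdot C_i=0$) and for $E$ itself, should it occur among the $C_i$ — harmless, since such components never realise the minimum $\mu$.
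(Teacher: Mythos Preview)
Your proof is correct, and it takes a somewhat different route from the paper's. The paper first bounds $\varepsilon\le m+C^2/m$ by pairing the nef class $\pi^*H-\varepsilon E$ with $C$ itself, and then argues that $\varepsilon$ is actually realised by one of the $C_i$: in the nef case this is forced by $C^2=0$, and in the non-nef case one picks $\alpha\in(\varepsilon,m)$, notes that $\pi^*H-\alpha E\sim C+(m-\alpha)E$ is effective but not nef, so any curve negative against it must be a component $C_i$; letting $\alpha\to\varepsilon^+$ yields $(\pi^*H-\varepsilon E)\cdot C_j=0$ for some $j$. Your argument instead defines $\mu$ as the minimum ratio, proves $\mu\le m+C^2/m\le m$ by the weighted sum $C^2=\sum_i a_i\,C\cdot C_i\ge(\mu-m)m$, and then \emph{directly} exhibits $\pi^*H-\mu E$ as a nef class by testing it against $E$, the $C_i$, and any other irreducible $\Gamma$ (the last case being where $\mu\le m$ is essential). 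Your approach is more constructive and avoids the case split and limiting argument; the paper's is slightly shorter on the inequality side since it gets $\varepsilon\le m+C^2/m$ in one line without first isolating $\mu$. Both hinge on the same point, namely that $C^2\le 0$ forces the Seshadri threshold to lie at or below $m$, which confines the ``bad'' curves to the components of $C$.
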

\begin{proof}
We prove~\ref{lem:1}.
Let $\varepsilon := \varepsilon(H,p)$.
Observe that we can write
$C^2 + (m-\varepsilon)E \cdot C
= (\pi^*H-mE + (m-\varepsilon)E)\cdot C 
= (\pi^*H-\varepsilon E)\cdot C \geq 0$, 
and, since $E\cdot C = m$,
we get
\[
 \varepsilon
 \leq m+\frac{C^2}{m}
 \leq m.
\]
If $C$ is nef then $\varepsilon\geq m$, 
so that $\varepsilon = m$ and $C^2 = 0$.
This implies $C\cdot C_i = 0$, and hence 
$\varepsilon = 
\pi^*H\cdot C_i/E\cdot C_i$ for any $i$,
proving the statement in this case.
If $C$ is not nef then $\varepsilon < m$.
If $\alpha$ is such that $\varepsilon< \alpha < m$ 
then $\pi^*H-\alpha E$ is effective and non-nef.
Let $C'$ be an irreducible curve such that
$(\pi^*H-\alpha E)\cdot C' < 0$, then 
$(\pi^*H-mE)\cdot C' < 0$ as well.
Thus $C' = C_i$ for some $i$.
Since $\alpha$ can be chosen arbitrarily
close to $\varepsilon$, we conclude that
$(\pi^*H-\varepsilon E)\cdot C_j = 0$
for some $j$, and the statement follows.
Statement~\ref{lem:2} is an immediate
consequence of~\ref{lem:1} and the equality
$\pi^*H\cdot C = C^2+m^2$.
\end{proof}

\begin{proof}[Proof of Theorem~\ref{prop:sesh-m}]
We prove~\ref{pro2-i}.
Let $v\in N$ be a width direction,
that is $\lw_v(\Delta) = \lw(\Delta)$
and let $C_v\subseteq X_\Delta$ 
be the strict transform
of the one-parameter subgroup of
the torus defined by $v$. 
If $\mu > \lw(\Delta)$, then $(\pi^*H - \mu E)\cdot C_v
< 0$, so that $\pi^*H - \mu E$ 
is not nef.
This proves the statement.

We prove~\ref{pro2-ii}.
Observe that 
$(\pi^*H-\varepsilon E)^2\geq
(\pi^*H-\lw(\Delta) E)^2 
= \vol(\Delta) - \lw(\Delta)^2 > 0$,
where the first inequality is by~\ref{pro2-i}.
By the Riemann-Roch theorem,
the class of $\pi^*H-\varepsilon E$ 
is in the interior of the effective 
cone $\Eff(X_\Delta)$.
It follows that the Seshadri constant
is computed by a curve $C\subseteq
X_\Delta$. From 
$(\pi^*H-\varepsilon E)\cdot C = 0$
one concludes that $\varepsilon$
is a rational number (see also~\cite{klm}*{Rem. 2.3}).

Statements~\ref{pro2-iii} and
\ref{pro2-iv} are consequence of
Lemma~\ref{lem:sesh} and the fact
that $C^2 = \vol(\Delta) - m^2$.
\end{proof}

\begin{proof}[Proof of Corollary~\ref{cor:inf}]
Observe that by hypothesis 
$\vol(\Delta) - m^2 = C^2 \leq 0$,
so that the hypothesis~\ref{pro2-iii}
of Theorem~\ref{prop:sesh-m} is satisfied.
Moreover, being $C$ irreducible we
have $\lw(\Delta)-m = C\cdot C_v\geq 0$,
where $C_v$ is the strict transform 
of the one-parameter subgroup of the
torus defined by the width direction
$v$. Thus also hypothesis~\ref{pro2-iv}
of Theorem~\ref{prop:sesh-m} is satisfied and
the statement follows.
\end{proof}

\begin{remark}
\label{rem:ito}
Observe that the best lower bound for $\varepsilon$ we can get from~\cite{ito}*{Thm.~1.3} in the case of a toric surface is either the width $\lw(\Delta)$, or the biggest length of a segment inside $\Delta$.
For instance, if $\Delta$ is a triangle of type (i) in Theorem~\ref{prop:inf}, consider the projection $\pi\colon
\qq^2\to \qq$ onto the second factor.
If we take the fiber $\Delta\cap \pi^{-1}(1)$, by~\cite{ito}*{Thm.~1.3} we have the inequality
\[
 \varepsilon(H,e) \geq \min\{m,m-1/m\} = m-1/m.
\]
Since $\vol(\Delta) = m^2-1$, by Theorem~\ref{prop:sesh-m}~\ref{pro2-iii},
we also have the inequality $\varepsilon(H,e) \leq (m^2-1)/m$,
so that we can conclude that $\varepsilon(H,e)$ is indeed equal
to $m-1/m$, no need of showing that there exists an irreducible
curve $C\in\ls_\Delta(m)$.
\begin{center}
\begin{tikzpicture}[scale=.3]
\tkzInit[xmax=4,ymax=4]
 \tkzDefPoint(0,0){P1}
 \tkzDefPoint(5,1){P2}
 \tkzDefPoint(5,0){PP2}
 \tkzDefPoint(1,5){P3}
 \tkzDefPoint(0,5){PP3}
 \tkzDefPoint(3,3){Q1}
 \tkzDefPoint(0.2,1){Q}
 \tkzDefPoint(16,-1){R1}
 \tkzDefPoint(16,6){R2}
 \tkzDefPoint(16,0){R3}
 \tkzDefPoint(16,5){R4}
 \tkzDefPoint(0,-1){A1}
 \tkzDefPoint(0,6){A2}
 \tkzDefPoint(-3,0){A3}
 \tkzDefPoint(6,0){A4}
 \tkzDefPoint(16,0){B1}
 \tkzDefPoint(16,5){B2}
 \tkzDefPoint(16,1){O}
 
 \tkzDefPoint(9,2.5){C1}
 \tkzDefPoint(13,2.5){C2}
 
 \tkzDrawPoints[size=2](B1)
 \tkzDrawPoints[size=2](B2)
 \tkzDrawPoints[size=2](O)

 \tkzDrawPoints[size=2](Q1)
 \tkzDefPoint(0,0){Q2}
 \tkzDrawPoints[size=2](Q2)
 \tkzDefPoint(1,5){Q3}
 \tkzDrawPoints[size=2](Q3)
 \tkzDefPoint(4,2){Q4}
 \tkzDrawPoints[size=2](Q4)
 \tkzDefPoint(2,4){Q5}
 \tkzDrawPoints[size=2](Q5)
 \tkzDefPoint(5,1){Q6}
 \tkzDrawPoints[size=2](Q6) 
 \tkzDrawSegments[color=black](P1,P3)
 \tkzDrawSegments[color=black](P2,P3)
 \tkzDrawSegments[color=black](P1,P2)
 
 \filldraw[black] (5,-0.7) node {\tiny{$m$}};
 \filldraw[black] (-1,5) node {\tiny{$m$}};
 \filldraw[black] (15.2,0) node {\tiny{$0$}};
 \filldraw[black] (15.2,1) node {\tiny{$1$}};

 \filldraw[black] (15.2,5) node {\tiny{$m$}};

 \filldraw[black] (11,2)  node {\tiny{$\pi$}};

 \tkzDrawSegments[thick,color=black](Q,P2) 

 \tkzDrawSegments[thick,color=black](R3,R4)

 \tkzDrawSegments[thin,color=black,dashed](P3,PP3) 
 \tkzDrawSegments[thin,color=black,dashed](P2,PP2) 
 \tkzDrawSegments[->,thin,color=black](R1,R2) 
 \tkzDrawSegments[->,thin,color=black](A1,A2) 
 \tkzDrawSegments[->,thin,color=black](A3,A4) 

 \tkzDrawSegments[->,color=black](C1,C2) 
 
\end{tikzpicture}

\end{center}
We also remark that in the remaining cases of Theorem~\ref{prop:inf} 
the bound given by~\cite{ito}*{Thm.~1.3} is not sharp.

In the same vein, if the lattice polygon $\Delta$ contains a segment of lattice length $\lw(\Delta)$, then~\cite{ito}*{Thm.~1.3} gives the bound  $\varepsilon(H,e) \geq \lw(\Delta)$. But since by 
Theorem~\ref{prop:sesh-m}
\ref{pro2-i}
we also have 
the opposite inequality,
we can immediately conclude that the 
Seshadri constant $\varepsilon(H,e)$ is indeed equal to $\lw(\Delta)$. 

For instance, this shows that for any $m\geq 4$, the polygon
with vertices
$(0,0),\, (0,1),\,\\ (m,1),\, (1,m)$
corresponds to a toric surface with Seshadri constant 
$\varepsilon(H,e) = m$ (even if it is not hard to find a
parametrisation as we did with the families of Theorem~\ref{prop:inf}).
\begin{center}
\begin{tikzpicture}[scale=.3]
\tkzInit[xmax=5,ymax=5]\tkzGrid[help lines]
 \tkzDefPoint(0,0){P1}
 \tkzDefPoint(5,1){P2}
 \tkzDefPoint(1,5){P3}
 \tkzDefPoint(0,1){P4}
 \tkzDefPoint(2,3){Q1}
 \tkzDrawPoints[size=2](Q1)
 \tkzDefPoint(4,1){Q2}
 \tkzDrawPoints[size=2](Q2)
 \tkzDefPoint(2,2){Q3}
 \tkzDrawPoints[size=2](Q3)
 \tkzDefPoint(2,1){Q4}
 \tkzDrawPoints[size=2](Q4)
 \tkzDefPoint(0,1){Q5}
 \tkzDrawPoints[size=2](Q5)
 \tkzDefPoint(0,0){Q6}
 \tkzDrawPoints[size=2](Q6)
 \tkzDefPoint(1,5){Q7}
 \tkzDrawPoints[size=2](Q7)
 \tkzDefPoint(3,3){Q8}
 \tkzDrawPoints[size=2](Q8)
 \tkzDefPoint(1,4){Q9}
 \tkzDrawPoints[size=2](Q9)
 \tkzDefPoint(5,1){Q10}
 \tkzDrawPoints[size=2](Q10)
 \tkzDefPoint(3,2){Q11}
 \tkzDrawPoints[size=2](Q11)
 \tkzDefPoint(1,3){Q12}
 \tkzDrawPoints[size=2](Q12)
 \tkzDefPoint(3,1){Q13}
 \tkzDrawPoints[size=2](Q13)
 \tkzDefPoint(1,2){Q14}
 \tkzDrawPoints[size=2](Q14)
 \tkzDefPoint(1,1){Q15}
 \tkzDrawPoints[size=2](Q15)
 \tkzDefPoint(2,4){Q16}
 \tkzDrawPoints[size=2](Q16)
 \tkzDefPoint(4,2){Q17}
 \tkzDrawPoints[size=2](Q17)
 \tkzDrawSegments[color=black](P1,P4)
 \tkzDrawSegments[color=black](P3,P4)
 \tkzDrawSegments[color=black](P2,P3)
 \tkzDrawSegments[color=black](P1,P2)
 \tkzDrawSegments[color=black](P2,P4)
\end{tikzpicture}
\end{center}
\end{remark}

\subsection{Weighted projective planes}
We briefly recall an open problem 
about the existence of certain 
irreducible curves in weighted projective planes, and its 
relation with intrinsic curves.
For a comprehensive reference on known facts 
and open problems on blow-ups of weighted 
projective planes see~\cite{ca}*{Sec. 6}.

Let $a,b,c$ be three positive pairwise 
coprime integers, let $\mathbb P(a,b,c)$
be the corresponding weighted projective plane and let $\pi\colon
X(a,b,c)\to\mathbb P(a,b,c)$ 
be the the blowing-up at the general
point $e := (1,1,1)$
with exceptional divisor $E$.
The divisor class group of $X := 
X(a,b,c)$ is free of rank $2$ and
the effective cone $\Eff(X)$ is in
general unknown.
By the Riemann-Roch theorem, $\Eff(X)$
contains the positive light cone $Q$
(shaded region) 
with extremal rays generated by 
$R_{\pm} = \pi^*H\pm\frac{1}{\sqrt{abc}}E$.
\begin{center}
\begin{tikzpicture}[scale=1]
\tkzDefPoint(0,0){O}
\tkzDefPoint(0,1){H}
\tkzDefPoint(1,0){E}
\tkzDefPoint(-0.5,1){R1}
\tkzDefPoint(0.5,1){R2}
\filldraw[fill=gray!50] (0,0) -- (.5,1) arc
(60:120:1cm) -- cycle;
\tkzDrawPoints[size=2](O,H,E,R1,R2)
\tkzDrawSegments[thick](O,H O,E)
\tkzDrawSegments[thick,dotted](O,R1 O,R2)
\tkzLabelPoint[right](E){\tiny $E$}
\tkzLabelPoint[above](0,1.1){\tiny $\pi^*H$}
\tkzLabelPoint[left](R1){\tiny $R_-$}
\tkzLabelPoint[right](R2){\tiny $R_+$}
\end{tikzpicture}
\end{center}
The question is whether $\Eff(X)$ is bounded by the $\mathbb R$-divisor
$R_-$, so that $\varepsilon(H,e) = 1/\sqrt{abc}$, or by the class of a negative curve  (lying below the ray $R_-$). 
In many examples (see for instance~\cite{ggk} and~\cite{hkl}) the existence of the negative curve has been proved, but in general the question is still open, and in fact it is conjectured that for some triples $a,b,c$ (such as $9,10,13$) that negative curve does not exist.
We remark that proving this conjecture 
would not only give an example of a surface having non rational Seshadri constant, but it would also imply the following well known conjecture 
(see~\cite{nag}), in some particular cases, as explained in Proposition~\ref{prop:nag}.

\begin{conjecture}[Nagata's Conjecture]
Let $\pi\colon X_r\to \mathbb P^2$ be the blowing-up at $r\geq 10$ points in very general position and let $E_1,\dots,E_r$ be the exceptional divisors. Then the class
$\pi^*\mathcal O(1) - \frac1{\sqrt{r}}\sum_i E_i$ is nef on
$X_r$.
\end{conjecture}
So far, Nagata's conjecture have been proved only when $r$ is a perfect square (\cite{nag}), but the following result shows that finding a triple $a,b,c$, such that the Seshadri constant at the general point $e\in X(a,b,c)$ is $1/\sqrt{abc}$, would imply Nagata's conjecture for $r = abc$ (it can be found for instance in~\cite{ck}*{Prop. 5.2}, but we give anyway a brief proof for the sake of completeness).
\begin{proposition}
\label{prop:nag}
If $\varepsilon(H,e) = 1/\sqrt{abc}$
then the Nagata's conjecture holds
for $abc$ points in the plane.
\end{proposition}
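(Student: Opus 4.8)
The plan is to prove the contrapositive: a failure of the Nagata conjecture for $abc$ points produces an effective class on $X(a,b,c)$ lying strictly below the ray $R_-$, which forces $\varepsilon(H,e)<1/\sqrt{abc}$.

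\textbf{Step 1: a toric cover of $\mathbb{P}(a,b,c)$ by $\mathbb{P}^2$.} Let $v_1,v_2,v_3$ be the primitive ray generators of the fan of $\mathbb{P}(a,b,c)$, normalised so that $av_1+bv_2+cv_3=0$. I would consider the lattice homomorphism $M\colon\mathbb{Z}^2\to\mathbb{Z}^2$ with $M(1,0)=av_1$, $M(0,1)=bv_2$ (hence $M(-1,-1)=cv_3$); it is compatible with the fans of $\mathbb{P}^2$ and $\mathbb{P}(a,b,c)$ and so induces a finite toric morphism $g\colon\mathbb{P}^2\to\mathbb{P}(a,b,c)$. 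Since $a,b,c$ are pairwise coprime one checks $|\det M|=abc$, so $g$ has degree $abc$, is étale over the torus, and $g^*H=\mathcal{O}_{\mathbb{P}^2}(1)$ (indeed $(g^*H)^2=abc\cdot H^2=1$, recalling $H^2=1/(abc)$). The fibre $g^{-1}(e)$ over $e=(1,1,1)$ is the kernel of the induced isogeny of tori, a set $\{p_1,\dots,p_{abc}\}$ of $abc$ distinct points. Because $g$ is étale near $e$, blowing up is compatible: $g$ lifts to a finite morphism $\tilde g\colon\tilde X\to X(a,b,c)$ of degree $abc$, where $\beta\colon\tilde X:=\Bl_{p_1,\dots,p_{abc}}\mathbb{P}^2\to\mathbb{P}^2$ has exceptional divisors $E_1,\dots,E_{abc}$, with $\tilde g^*\pi^*H=\beta^*\mathcal{O}_{\mathbb{P}^2}(1)$ and $\tilde g^*E=E_1+\dots+E_{abc}$.

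\textbf{Step 2: transporting a Nagata counterexample.} Assume the Nagata conjecture fails for $abc$ points. By standard reductions it suffices to treat a single multiplicity at each point, so a failure yields positive integers $d,m$ with $d^2<abc\,m^2$ such that for a very general configuration of $abc$ points there is a plane curve of degree $d$ with multiplicity $\ge m$ at each of them; since this is a closed condition on the configuration and a countable union of proper closed conditions cannot contain a very general point, the same holds for \emph{every} configuration, in particular for $\{p_1,\dots,p_{abc}\}=g^{-1}(e)$. Thus the class $\gamma:=d\,\beta^*\mathcal{O}_{\mathbb{P}^2}(1)-m(E_1+\dots+E_{abc})$ is effective on $\tilde X$. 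By Step 1, $\gamma=\tilde g^*D$ where $D:=d\,\pi^*H-mE$, so $\tilde g_*\gamma=(\deg\tilde g)\,D=abc\,D$; being the push-forward of an effective class, $abc\,D$, hence $D$, is effective on $X(a,b,c)$.

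\textbf{Step 3: conclusion.} One has $D^2=d^2/(abc)-m^2<0$, so $D$ is not nef. The nef class $\pi^*H-\varepsilon(H,e)\,E$ pairs non-negatively with the effective class $D$, giving $0\le(\pi^*H-\varepsilon(H,e)E)\cdot D=d/(abc)-\varepsilon(H,e)\,m$, i.e. $\varepsilon(H,e)\le d/(abc\,m)$; and $d^2<abc\,m^2$ forces $d/(abc\,m)<1/\sqrt{abc}$. This contradicts $\varepsilon(H,e)=1/\sqrt{abc}$, so the Nagata conjecture holds for $abc$ points. The step I expect to require the most care is Step 1 — checking that $g$ is well defined with $g^*H=\mathcal{O}_{\mathbb{P}^2}(1)$, that it is étale over the torus so that the $p_i$ are distinct and the blow-ups are compatible, and that $\tilde g^*E=\sum_iE_i$ — after which Steps 2 and 3 are a short formal computation.
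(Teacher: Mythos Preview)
Your proof is correct and rests on the same key construction as the paper: the degree-$abc$ toric cover $g\colon\mathbb{P}^2\to\mathbb{P}(a,b,c)$, which the paper writes explicitly as $(x,y,z)\mapsto(x^a,y^b,z^c)$. The difference is only in direction. The paper argues directly rather than by contrapositive: the hypothesis $\varepsilon(H,e)=1/\sqrt{abc}$ says exactly that $R_-=\pi^*H-\tfrac{1}{\sqrt{abc}}E$ is nef on $X(a,b,c)$, so its pullback $L-\tfrac{1}{\sqrt{abc}}\sum_i E_i$ is nef on the blow-up $Y_r$ of $\mathbb{P}^2$ at $g^{-1}(e)$; then semicontinuity of $h^0$ gives $\overline{\Eff}(X_r)\subseteq\overline{\Eff}(Y_r)$, hence $\overline{\Nef}(X_r)\supseteq\overline{\Nef}(Y_r)$, so the same class is nef for $r=abc$ very general points, which is Nagata. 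Your route---specialise a Nagata counterexample to the particular configuration $g^{-1}(e)$ via the closedness of the jump locus, then push the resulting effective class forward along $\tilde g$---is the dual reformulation of that same semicontinuity step, trading ``pull back nefness'' for ``push forward effectiveness''. Both are short; the paper's version is a little cleaner in that it avoids the reduction to uniform multiplicities and the explicit pairing computation in your Step~3.
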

\begin{proof}
Let $f\colon\mathbb P^2\to\mathbb P(a,b,c)$ be the morphism defined by 
$(x,y,z)\mapsto (x^a,y^b,z^c)$
and let $Y_r$ be the blowing-up of
$\mathbb P^2$ at the $r := abc$ points
of $f^{-1}(e)$.
Since $R_-$ is nef, also
\[
 f^*R_- =
 L-\frac{1}{\sqrt{abc}}
 \sum_{i=1}^{abc}E_i
\]
is nef on $Y_r$. If we denote by $X_r$ 
the blowing-up of $\mathbb P^2$ at
$r$ points in very general position
then $\overline{\rm Eff}(X_r)
\subseteq\overline{\rm Eff}(Y_r)$
by semicontinuity of the dimension of
cohomology.
Thus $\overline{\rm Nef}(X_r)
\supseteq\overline{\rm Nef}(Y_r)$
so that $f^*R_-$ is nef also on $X_r$.
\end{proof}

\begin{remark}
If $C$ is a very general smooth 
irreducible curve of positive 
genus $g$, then the N\'eron-Severi 
group, over the rational numbers, 
of the symmetric product ${\rm Sym}^2(C)$ 
has rank two.
In~\cite{ciko}*{Prop.~3.1} the authors show
that if the Nagata's conjecture is true and
$g\geq 9$, then the effective cone of 
${\rm Sym}^2(C)$ is open on one side.
\end{remark}

The equality $\varepsilon(H,e) = 1/\sqrt{abc}$ holds if and only if there does not exist a negative curve in $X(a,b,c)$ having class $d\pi^*H- mE$, with $d/m < \sqrt{abc}$.
A partial result in this direction is given by ~\cite{km}*{Thm. 5.4}, where the authors show that if the
negative curve is expected,
then $d$ is bounded from above and
this result allows them to conclude
that there are no such 
curves on certain $X(a,b,c)$, like
e.g. $X(9,10,13)$.

On the other hand we can also say that the equality
$\varepsilon(H,e) = 1/\sqrt{abc}$
holds if and only if there exists
a sequence $\pi^*d_nH- m_nE$ of classes of positive irreducible
curves in $X(a,b,c)$ such that 
$d_n/m_n\to \sqrt{abc}$, that is
these classes approach the ray $R_-$
from the inside of the light cone.
In this direction observe that an 
intrinsic negative curve can appear
as a positive curve in $X(a,b,c)$. 
We discuss this approach 
for $X(9,10,13)$ by producing many
intrinsic $(-1)$-curves which
are positive curves on the surface.
We proceed using the fact that
the Cox ring of $X(a,b,c)$ is 
isomorphic to the extended saturated Rees algebra
(see~\cite{hkl} and
\cite{adhl}*{Prop. 4.1.3.8}):
\[
 R[I]^{\rm sat}
 :=
 \bigoplus_{m\in\mathbb Z}
 (I^m : J^\infty) t^{-m}
 \subseteq R[t^{\pm 1}],
\]
where $R = \mathbb K[x,y,z]$ is the
Cox ring of the weighted projective
plane, $I$ is the ideal of $(1,1,1)$
in Cox coordinates, $J = \langle
x,y,z\rangle$ is the irrelevant
ideal and $I^{-m} = R$ for any
$m\geq 0$. 
Using this we can compute a
minimal generating set consisting of
homogeneous elements of given
bounded multiplicity at $(1,1,1)$. 
In the case of $X(9,10,13)$,
fixing the maximum of the  
multiplicity to be $30$, we found
$52$ generators. In the following
table we display the degrees of
these generators together with the
self-intersection of the corresponding 
intrinsic curve and its genus (while 
the self-intersection of the curve 
on $X(9,10,13)$ is $d^2/abc - m^2$).

\vspace{3mm}

{\tiny
\begin{center}
\begin{tabular}{c|c|c|c}
\hline
$d$ & $m$ & $C^2$ & $p_a$\\
\hline
36 & 1 & 0 & 0\\
39 & 1 & 0 & 0\\
40 & 1 & 0 & 0\\
83 & 2 & -1 & 0\\
109 & 3 & -1 & 0\\
110 & 3 & -1 & 0\\
113 & 3 & -1 & 0\\
139 & 4 & -1 & 0\\
140 & 4 & -1 & 0\\
143 & 4 & -1 & 0\\
208 & 6 & -1 & 0\\
209 & 6 & -1 & 0\\
210 & 6 & -1 & 0\\
\end{tabular}
\quad
\begin{tabular}{c|c|c|c}
\hline
$d$ & $m$ & $C^2$ & $p_a$\\
\hline
213 & 6 & -2 & 0\\
243 & 7 & 0 & 1\\
309 & 9 & -1 & 0\\
310 & 9 & -1 & 0\\
312 & 9 & -1 & 0\\
313 & 9 & -1 & 0\\
378 & 11 & -1 & 0\\
379 & 11 & -1 & 0\\
380 & 11 & -1 & 0\\
413 & 12 & -1 & 0\\
481 & 14 & -1 & 0\\
482 & 14 & -1 & 0\\
483 & 14 & -1 & 0\\
\end{tabular}
\quad
\begin{tabular}{c|c|c|c}
\hline
$d$ & $m$ & $C^2$ & $p_a$\\
\hline
516 & 15 & 0 & 1\\
549 & 16 & -1 & 0\\
550 & 16 & -1 & 0\\
551 & 16 & -1 & 0\\
585 & 17 & -1 & 0\\
652 & 19 & -1 & 0\\
653 & 19 & -1 & 0\\
686 & 20 & 0 & 1\\
720 & 21 & 1 & 2\\
721 & 21 & -1 & 0\\
755 & 22 & -1 & 0\\
789 & 23 & 0 & 1\\
790 & 23 & 1 & 2\\
\end{tabular}
\quad
\begin{tabular}{c|c|c|c}
\hline
$d$ & $m$ & $C^2$ & $p_a$\\
\hline
823 & 24 & -1 & 0\\
824 & 24 & -1 & 0\\
858 & 25 & -1 & 0\\
\textcolor{red}{891} & 
\textcolor{red}{26} & 
\textcolor{red}{-1} & 
\textcolor{red}{0}\\
892 & 26 & -1 & 0\\
893 & 26 & 0 & 1\\
893 & 26 & 3 & 3\\
926 & 27 & -1 & 0\\
\textcolor{red}{959} & 
\textcolor{red}{28} & 
\textcolor{red}{0} & 
\textcolor{red}{1}\\
960 & 28 & 0 & 1\\
994 & 29 & -1 & 0\\
1028 & 30 & 0 & 1\\
1029 & 30 & -1 & 0\\
\end{tabular}
\end{center}
}

\ \\

The slope $d/m$ which best
approximates $\sqrt{9\cdot 10
\cdot 13} \sim 34.20526$
is $959/28 = 34.25$, realized by 
the last curve. The best approximation given by an intrinsic $(-1)$-curve of the above list is $891/26 \sim 34.26923$. The following question naturally arises.
\begin{question}
Is it possible to construct an infinite family of intrinsic $(-1)$-curves appearing as positive curves in $X(9,10,13)$, and whose slopes approach $\sqrt{9\cdot 10\cdot 13}$?
\end{question}

\begin{bibdiv}
\begin{biblist}

\bib{ai}{article}{
   author={Ambro, Florin},
   author={Ito, Atsushi},
   title={Successive minima of line bundles},
   journal={Adv. Math.},
   volume={365},
   date={2020},
   pages={107045, 38},
   issn={0001-8708},
   review={\MR{4065714}},
   doi={10.1016/j.aim.2020.107045},
}

\bib{adhl}{book}{
   author={Arzhantsev, Ivan},
   author={Derenthal, Ulrich},
   author={Hausen, J\"{u}rgen},
   author={Laface, Antonio},
   title={Cox rings},
   series={Cambridge Studies in Advanced Mathematics},
   volume={144},
   publisher={Cambridge University Press, Cambridge},
   date={2015},
   pages={viii+530},
   isbn={978-1-107-02462-5},
   review={\MR{3307753}},
}

\bib{bal}{article}{
 AUTHOR = {Balletti, Gabriele},
 TITLE = {Enumeration of Lattice Polytopes by Their Volume},
 YEAR = {2020},
 JOURNAL = {Discrete Comput. Geom.},
 URL = {https://github.com/gabrieleballetti/small-lattice-polytopes/tree/master/data/2-polytopes}
}

\bib{bau}{article}{
   author={Bauer, Thomas},
   title={Seshadri constants on algebraic surfaces},
   journal={Math. Ann.},
   volume={313},
   date={1999},
   number={3},
   pages={547--583},
   issn={0025-5831},
   review={\MR{1678549}},
   doi={10.1007/s002080050272},
}

\bib{primer}{article}{
   author={Bauer, Thomas},
   author={Di Rocco, Sandra},
   author={Harbourne, Brian},
   author={Kapustka, Micha\l },
   author={Knutsen, Andreas},
   author={Syzdek, Wioletta},
   author={Szemberg, Tomasz},
   title={A primer on Seshadri constants},
   conference={
      title={Interactions of classical and numerical algebraic geometry},
   },
   book={
      series={Contemp. Math.},
      volume={496},
      publisher={Amer. Math. Soc., Providence, RI},
   },
   date={2009},
   pages={33--70},
   review={\MR{2555949}},
   doi={10.1090/conm/496/09718},
}

\bib{mag}{article}{
    AUTHOR = {Bosma, Wieb},
    author={Cannon, John},
    author={Playoust, Catherine},
     TITLE = {The {M}agma algebra system. {I}. {T}he user language},
      NOTE = {Computational algebra and number theory (London, 1993)},
   JOURNAL = {J. Symbolic Comput.},
  JOURNAL = {Journal of Symbolic Computation},
    VOLUME = {24},
      YEAR = {1997},
    NUMBER = {3-4},
     PAGES = {235--265},
       URL = {http: \\dx.doi.org/10.1006/jsco.1996.0125},
}

\bib{ca}{article}{
   author={Castravet, Ana-Maria},
   title={Mori dream spaces and blowing-ups},
   conference={
      title={Algebraic geometry: Salt Lake City 2015},
   },
   book={
      series={Proc. Sympos. Pure Math.},
      volume={97},
      publisher={Amer. Math. Soc., Providence, RI},
   },
   date={2018},
   pages={143--167},
}

\bib{cltu}{article}{
   author={Castravet, Ana-Maria},
   author={Laface, Antonio},
   author={Tevelev, Jenia},
   author={Ugaglia, Luca},
   title={Blown-up toric surfaces with non-polyhedral effective cone},
   journal={arXiv:2009.14298},
   date={2020},
}

\bib{ct}{article}{   
  AUTHOR = {Castravet, Ana-Maria}
  AUTHOR={Tevelev, Jenia},
    TITLE = {{$\overline{M}_{0,n}$} is not a {M}ori dream space},
   JOURNAL = {Duke Math. J.},
   FJOURNAL = {Duke Mathematical Journal},
    VOLUME = {164},
      YEAR = {2015},
    NUMBER = {8},
     PAGES = {1641--1667},
      ISSN = {0012-7094},
   MRCLASS = {14E30 (14H10 14J60 14M25 14N20)},
   MRNUMBER = {3352043},
   MRREVIEWER = {M. Kh. Gizatullin},
       URL = {http://dx.doi.org/10.1215/00127094-3119846},
}

\bib{ciko}{article}{
   author={Ciliberto, Ciro},
   author={Kouvidakis, Alexis},
   title={On the symmetric product of a curve with general moduli},
   journal={Geom. Dedicata},
   volume={78},
   date={1999},
   number={3},
   pages={327--343},
   issn={0046-5755},
   review={\MR{1725369}},
   doi={10.1023/A:1005280023724},
}

\bib{cls}{book}{
   author={Cox, David A.},
   author={Little, John B.},
   author={Schenck, Henry K.},
   title={Toric varieties},
   series={Graduate Studies in Mathematics},
   volume={124},
   publisher={American Mathematical Society, Providence, RI},
   date={2011},
   pages={xxiv+841},
   isbn={978-0-8218-4819-7},
   review={\MR{2810322}},
   doi={10.1090/gsm/124},
}

\bib{ck}{article}{
   author={Cutkosky, Steven Dale},
   author={Kurano, Kazuhiko},
   title={Asymptotic regularity of powers of ideals of points in a weighted
   projective plane},
   journal={Kyoto J. Math.},
   volume={51},
   date={2011},
   number={1},
   pages={25--45},
   issn={2156-2261},
   review={\MR{2784746}},
   doi={10.1215/0023608X-2010-019},
}

\bib{ds}{article}{
   author={D'Andrea, Carlos},
   author={Sombra, Mart\'{\i}n},
   title={The Newton polygon of a rational plane curve},
   journal={Math. Comput. Sci.},
   volume={4},
   date={2010},
   number={1},
   pages={3--24},
   issn={1661-8270},
   review={\MR{2739302}},
   doi={10.1007/s11786-010-0045-2},
}

\bib{el}{article}{
   author={Ein, Lawrence},
   author={Lazarsfeld, Robert},
   title={Seshadri constants on smooth surfaces},
   note={Journ\'{e}es de G\'{e}om\'{e}trie Alg\'{e}brique d'Orsay (Orsay, 1992)},
   journal={Ast\'{e}risque},
   number={218},
   date={1993},
   pages={177--186},
   issn={0303-1179},
   review={\MR{1265313}},
}

\bib{fss}{article}{
   author={Farnik, \L ucja},
   author={Szemberg, Tomasz},
   author={Szpond, Justyna},
   author={Tutaj-Gasi\'{n}ska, Halszka},
   title={Restrictions on Seshadri constants on surfaces},
   journal={Taiwanese J. Math.},
   volume={21},
   date={2017},
   number={1},
   pages={27--41},
   issn={1027-5487},
   review={\MR{3613972}},
   doi={10.11650/tjm.21.2017.7746},
}

\bib{ggk}{article}{
   author={Gonz\'{a}lez Anaya, Javier},
   author={Gonz\'{a}lez, Jos\'{e} Luis},
   author={Karu, Kalle},
   title={Constructing non-Mori dream spaces from negative curves},
   journal={J. Algebra},
   volume={539},
   date={2019},
   pages={118--137},
   issn={0021-8693},
   review={\MR{3995238}},
   doi={10.1016/j.jalgebra.2019.08.005},
}

\bib{ggk1}{article}{
   author={Gonz\'{a}lez Anaya, Javier},
   author={Gonz\'{a}lez, Jos\'{e} Luis},
   author={Karu, Kalle},
   title={On a family of negative curves},
   journal={J. Pure Appl. Algebra},
   volume={223},
   date={2019},
   number={11},
   pages={4871--4887},
   issn={0022-4049},
}

\bib{hkw}{article}{
   author={Haase, Christian},
   author={K\"{u}ronya, Alex},
   author={Walter, Lena},
   title={Toric Newton-Okounkov functions with an application 
   to the rationality of certain Seshadri constants on surfaces},
   journal={arXiv:2008.04018},
   date={2020},
}

\bib{hkl}{article}{
   author={Hausen, J\"{u}rgen},
   author={Keicher, Simon},
   author={Laface, Antonio},
   title={On blowing up the weighted projective plane},
   journal={Math. Z.},
   volume={290},
   date={2018},
   number={3-4},
   pages={1339--1358},
   issn={0025-5874},
   review={\MR{3856856}},
   doi={10.1007/s00209-018-2065-6},
}

\bib{ito}{article}{
   author={Ito, Atsushi},
   title={Seshadri constants via toric degenerations},
   journal={J. Reine Angew. Math.},
   volume={695},
   date={2014},
   pages={151--174},
   issn={0075-4102},
   review={\MR{3276156}},
   doi={10.1515/crelle-2012-0116},
}

\bib{kur}{article}{
   author={Kurano, Kazuhiko},
   title={Equations of negative curves of blow-ups of Ehrhart rings of rational convex polygons},
   journal={arXiv:2101.02448},
   date={2021},
}

\bib{km}{article}{
   author={Kurano, Kazuhiko},
   author={Matsuoka, Naoyuki},
   title={On finite generation of symbolic Rees rings of space monomial
   curves and existence of negative curves},
   journal={J. Algebra},
   volume={322},
   date={2009},
   number={9},
   pages={3268--3290},
   issn={0021-8693},
   review={\MR{2567420}},
   doi={10.1016/j.jalgebra.2008.08.015},
}

\bib{klm}{article}{
   author={K\"{u}ronya, Alex},
   author={Lozovanu, Victor},
   author={Maclean, Catriona},
   title={Convex bodies appearing as Okounkov bodies of divisors},
   journal={Adv. Math.},
   volume={229},
   date={2012},
   number={5},
   pages={2622--2639},
   issn={0001-8708},
   review={\MR{2889138}},
   doi={10.1016/j.aim.2012.01.013},
}

\bib{lu}{article}{
   author={Laface, Antonio},
   author={Ugaglia, Luca},
   title={On base loci of higher fundamental forms of toric varieties},
   journal={J. Pure Appl. Algebra},
   volume={224},
   date={2020},
   number={12},
   pages={106447, 18},
   issn={0022-4049},
   review={\MR{4102180}},
   doi={10.1016/j.jpaa.2020.106447},
}

\bib{nag}{article}{
   author={Nagata, Masayoshi},
   title={On the $14$-th problem of Hilbert},
   journal={Amer. J. Math.},
   volume={81},
   date={1959},
   pages={766--772},
   issn={0002-9327},
   review={\MR{105409}},
   doi={10.2307/2372927},
}

\bib{nak}{article}{
   author={Nakamaye, Michael},
   title={Seshadri constants and the geometry of surfaces},
   journal={J. Reine Angew. Math.},
   volume={564},
   date={2003},
   pages={205--214},
   issn={0075-4102},
   review={\MR{2021040}},
   doi={10.1515/crll.2003.091},
}

\bib{ow}{article}{
   author={Orlik, P.},
   author={Wagreich, P.},
   title={Algebraic surfaces with $k\sp*$-action},
   journal={Acta Math.},
   volume={138},
   date={1977},
   number={1-2},
   pages={43--81},
   issn={0001-5962},
   review={\MR{460342}},
   doi={10.1007/BF02392313},
}

\bib{te}{article}{
   author={Tevelev, Jenia},
   title={Compactifications of subvarieties of tori},
   journal={Amer. J. Math.},
   volume={129},
   date={2007},
   number={4},
   pages={1087--1104},
   issn={0002-9327},
   review={\MR{2343384}},
   doi={10.1353/ajm.2007.0029},
}

\end{biblist}
\end{bibdiv}

\end{document}